\renewcommand{\setminus}{\smallsetminus}
\newcommand{\ifsodaelse}[2]{\ifthenelse{\isundefined{\SODAF}}{#2}{#1}}
\newcommand\remove[1]{}
\newcommand{\rnote}[1]{}
\newcommand{\jnote}[1]{}
\newcommand{\1}{\mathbf{1}}
\newcommand{\e}{\varepsilon}
\newcommand{\R}{\mathbb{R}}
\newcommand{\E}{\mathbb{E}}
\newcommand{\N}{\mathbb{N}}
\newcommand\F{{{\mathscr F}}}
\newtheorem{theorem}{Theorem}[section]
\newtheorem{lemma}[theorem]{Lemma}
\newtheorem{corollary}[theorem]{Corollary}
\newtheorem{remark}{Remark}[section]
\newcommand{\eqdef}{\stackrel{\mathrm{def}}{=}}
\date{}
\renewcommand{\le}{\leqslant}
\renewcommand{\ge}{\geqslant}
  \newtheorem{proposition}[subsection]{Proposition}
\begin{document}

\title{On the Banach space valued Azuma inequality and small set isoperimetry of Alon-Roichman graphs}

\author{Assaf Naor}
\address{Courant Institute, 251 Mercer Street, New York, NY 10012, USA.}
\email{naor@cims.nyu.edu}
\thanks{Research supported in part by NSF grants CCF-0635078 and CCF-0832795, BSF grant 2006009, and the Packard Foundation.}

\date{}
\maketitle

\begin{abstract}
We discuss the connection between the expansion of small sets in
graphs, and the Schatten norms of their adjacency matrix. In
conjunction with a variant of the Azuma inequality for uniformly
smooth normed spaces, we deduce improved bounds on the small set
isoperimetry of Abelian Alon-Roichman random Cayley graphs.
\end{abstract}

\section{Introduction}

In what follows all graphs are allowed to have multiple edges and
self loops. For a finite group $\Gamma$ of cardinality $n$, the
Cayley graph associated to the  group elements $g_1,\ldots,g_k\in
\Gamma$ is the $2k$-regular graph $G=(V,E)$, where $V=\Gamma$, and
the number of edges joining $u,v\in \Gamma$ equals $\left|\{i\in
\{1,\ldots,n\}:\ uv^{-1}=g_i\}\right|+|\{i\in \{1,\ldots,n\}:\
uv^{-1}=g_i^{-1}\}|$.

For a graph $G=(V,E)$ and $S\subseteq V$, let $E_G(S,V\setminus S)$
denote the size of the edge boundary of $S$, i.e., the number of
edges joining $S$ and its complement. The Alon-Roichman
theorem~\cite{AR94} asserts that random Cayley graphs obtained by
choosing $k$ group elements independently and uniformly at random
are good expanders, provided $k$ is large enough:

\begin{theorem}[Alon-Roichman theorem]\label{thm:AR}
For every $\e\in (0,1)$ there exists $c(\e)\in (0,\infty)$ with the
following property. Let $\Gamma$ be a finite group of cardinality
$n$. Assume that $k\ge c(\e)\log n$. Then with probability at least
$\frac12$ over $g_1,\ldots,g_k$ chosen independently and uniformly
at random from $\Gamma$, if $G$ is the Cayley graph associated to
$g_1,\ldots,g_k$, then for every $\emptyset\neq S\subsetneq \Gamma$
we have
$$
\left|\frac{E_G(S,V\setminus S)}{\frac{2k}{n}|S|(n-|S|)}-1\right|\le
\e.
$$
\end{theorem}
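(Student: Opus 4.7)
The plan is to reduce the assertion to a spectral bound on the random adjacency matrix of $G$ and then apply a noncommutative concentration inequality. Let $P_g:\ell_2(\Gamma)\to \ell_2(\Gamma)$ denote the permutation sending $\delta_u\mapsto \delta_{ug}$, so that the adjacency matrix of $G$ equals $A \eqdef \sum_{i=1}^k (P_{g_i}+P_{g_i^{-1}})$. Uniformity of the $g_i$ gives $\E[A]=\tfrac{2k}{n}J$, where $J$ is the all-ones $\Gamma\times \Gamma$ matrix. For any $S\subseteq \Gamma$ the edge-boundary is $E_G(S,\Gamma\setminus S)=\1_S^T A \1_{\Gamma\setminus S}$, with expectation exactly $\tfrac{2k}{n}|S|(n-|S|)$. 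Writing $\1_S=\tfrac{|S|}{n}\1+u_S$ with $u_S\perp \1$ and $\|u_S\|_2^2=\tfrac{|S|(n-|S|)}{n}$, and using that $(A-\E A)\1=0$, a short computation yields $\1_S^T(A-\E A)\1_{\Gamma\setminus S}=-u_S^T(A-\E A)u_S$, hence
$$\left|E_G(S,\Gamma\setminus S)-\tfrac{2k}{n}|S|(n-|S|)\right| \;\leq\; \|A-\E A\|_{\mathrm{op}} \cdot \tfrac{|S|(n-|S|)}{n}.$$
Dividing by $\tfrac{2k}{n}|S|(n-|S|)$ shows that the theorem reduces to proving that $\|A-\E A\|_{\mathrm{op}}\leq 2k\e$ with probability $\geq \tfrac12$. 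This is a single operator-norm event that then controls all $S$ simultaneously.

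The second step is matrix concentration. The summands $X_i\eqdef P_{g_i}+P_{g_i^{-1}}-\tfrac{2}{n}J$ are independent, mean zero, of operator norm at most $4$, and satisfy $\|\E X_i^2\|_{\mathrm{op}}=O(1)$ (as one checks by expanding $X_i^2$ and using $JP_g=P_gJ=J$ together with $P_g^{\ast}P_g=I$). Matrix Bernstein---or equivalently the noncommutative Khintchine bound on the Schatten class $S_p$ optimized at $p\sim \log n$---then gives
$$\Prob\!\bigl(\|A-\E A\|_{\mathrm{op}}>t\bigr)\;\leq\; 2n\exp\!\left(-\frac{c\,t^2}{k+t}\right)$$
for some absolute constant $c>0$. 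Taking $t=2k\e$ and $k\geq C\e^{-2}\log n$ with $C$ a sufficiently large absolute constant drives the right-hand side below $\tfrac12$, yielding $c(\e)=O(1/\e^2)$.

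The main obstacle is the matrix concentration step itself: because the operators $P_{g_i}$ do not commute in general, one cannot simultaneously diagonalize and reduce to a scalar Chernoff bound. (Only in the abelian case do the characters simultaneously diagonalize every $P_g$, reducing the problem to $n$ scalar Chernoff estimates; this is the route of the original proof.) In full generality one needs a genuinely noncommutative tool, which, consonant with the paper's title, can be extracted from a Banach-space-valued Azuma inequality applied with a Schatten norm as target. Everything else in the argument is routine linear algebra.
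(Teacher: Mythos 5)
Your argument is correct, and it follows precisely the route that the paper (which only \emph{cites} Theorem~\ref{thm:AR} to \cite{AR94} and its successors, without reproving it) describes: establish that $\max_{i\ge 2}|\lambda_i(G)|\le\e$, then invoke the expander--mixing inequality~\eqref{eq:mixing}. Your linear-algebra reduction is exactly a reproof of~\eqref{eq:mixing}: writing $\1_S=\frac{|S|}{n}\1+u_S$ and using $(A-\E A)\1=\1^T(A-\E A)=0$ gives $\1_S^T(A-\E A)\1_{\Gamma\setminus S}=-u_S^T(A-\E A)u_S$, and since $\|u_S\|_2^2=\frac{|S|(n-|S|)}{n}$ and $\|A-\E A\|_{\mathrm{op}}=2k\max_{i\ge 2}|\lambda_i(G)|$, this is~\eqref{eq:mixing} verbatim. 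The concentration step (matrix Bernstein, or equivalently noncommutative Khinchine optimized at $p\sim\log n$) is the Ahlswede--Winter route used in \cite{LR04,CM08,WX08}, and your computation $\E X_i^2=2I+\E P_{g_i^2}+\E P_{g_i^{-2}}-\frac{4}{n}J$ shows $\|\E X_i^2\|_{\mathrm{op}}=O(1)$, so the variance proxy is $O(k)$ and $k\gtrsim \e^{-2}\log n$ suffices, as claimed.

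One historical correction: your closing remark attributes the scalar-Chernoff-via-characters argument to ``the original proof.'' That is not so---the original Alon--Roichman argument in \cite{AR94} treats \emph{general} finite groups by a Wigner-semicircle moment method (as the paper states explicitly); the reduction to $n$ independent scalar Chernoff bounds is only available for Abelian $\Gamma$ and is not how the theorem was first proved. Also note that one should be a bit careful saying matrix Bernstein is ``equivalent'' to the $S_p$ Khinchine bound at $p\sim\log n$: they agree up to constants in the sub-Gaussian regime relevant here, but they are not formally equivalent statements. Neither point affects the correctness of your argument for the theorem as stated.
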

%We note that by adjusting $c(\e)$ one can make the $\frac12$ lower
%bound on the probability in Theorem~\ref{thm:AR} be an arbitrary
%constant in $(0,1)$.
Subsequent investigations by several
authors~\cite{Pak99,LR04,LS04,WX08,CM08} yielded new proofs, with
various improvements, of the Alon-Roichman theorem. The best known
upper bound on $c(\e)$ is $O(1)/\e^2$; see~\cite{CM08} for the best
known estimate on the implied constant in the $O(1)$ term.

Here we obtain an improved estimate on the isoperimetric profile of
random Cayley graphs of Abelian groups:
\begin{theorem}\label{thm:our} There exists a universal constant $c\in (0,\infty)$
with the following property. Let $\Gamma$ be an Abelian group of
cardinality $n$. Assume that $k\ge \frac{c\log n}{\e^2}$. Then with
probability at least $\frac12$ over $g_1,\ldots,g_k$ chosen
independently and uniformly at random from $\Gamma$, if $G$ is the
Cayley graph associated to $g_1,\ldots,g_k$, then for every
$S\subseteq \Gamma$ with $2\le |S|\le \frac{n}{2}$ we have
\begin{equation}\label{eq:sqrt}
\left|\frac{E_G(S,V\setminus S)}{\frac{2k}{n}|S|(n-|S|)}-1\right|\le
\e\sqrt{\frac{\log |S|}{\log n}}.
\end{equation}
\end{theorem}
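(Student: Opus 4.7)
The plan is to exploit the commutative structure of $\Gamma$ to diagonalise the centred adjacency operator in the character basis, so that its Schatten $p$-norm reduces to an $\ell^p$-sum of independent scalar sums controllable by the scalar Khintchine--Rosenthal (a.k.a.\ Azuma) inequality --- the Abelian shadow of the Banach-valued Azuma inequality advertised in the abstract. Let $\widehat\Gamma$ denote the dual of $\Gamma$; for $\chi\in\widehat\Gamma$, set $\lambda_\chi=\sum_{i=1}^k\bigl(\chi(g_i)+\overline{\chi(g_i)}\bigr)$ and $\widehat{\1_S}(\chi)=\frac{1}{\sqrt{n}}\sum_{x\in S}\overline{\chi(x)}$, so that $|\widehat{\1_S}(\chi)|\le |S|/\sqrt{n}$ and $\sum_{\chi\ne 1}|\widehat{\1_S}(\chi)|^2=|S|(n-|S|)/n$ by Parseval. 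Expanding $E_G(S,V\setminus S)=\langle\1_S,A\1_{V\setminus S}\rangle$ and diagonalising $A$ yields the Fourier identity
\[
\frac{E_G(S,V\setminus S)}{(2k/n)|S|(n-|S|)}-1\;=\;-\frac{\sum_{\chi\ne 1}\lambda_\chi|\widehat{\1_S}(\chi)|^2}{(2k/n)|S|(n-|S|)}.
\]

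Next I apply H\"older's inequality on $\ell^p(\widehat\Gamma)$ with $p\ge 2$ and conjugate $p'=p/(p-1)$: the numerator above is bounded by $\bigl(\sum_{\chi\ne 1}|\lambda_\chi|^p\bigr)^{1/p}\cdot\bigl(\sum_\chi|\widehat{\1_S}(\chi)|^{2p'}\bigr)^{1/p'}$. Interpolating the second factor between the pointwise bound $\|\widehat{\1_S}\|_\infty\le|S|/\sqrt n$ and Parseval gives $\bigl(\sum_\chi|\widehat{\1_S}(\chi)|^{2p'}\bigr)^{1/p'}\le |S|^{1+1/p}n^{-1/p}$. Dividing by $(2k/n)|S|(n-|S|)\ge k|S|$ (using $|S|\le n/2$) then yields
\[
\left|\frac{E_G(S,V\setminus S)}{(2k/n)|S|(n-|S|)}-1\right|\;\le\;\frac{1}{k}\Bigl(\frac{|S|}{n}\Bigr)^{\!1/p}\Bigl(\sum_{\chi\ne 1}|\lambda_\chi|^p\Bigr)^{\!1/p}.
\]

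The probabilistic input is a uniform-in-$p$ bound on the Schatten-type quantity $\bigl(\sum_{\chi\ne 1}|\lambda_\chi|^p\bigr)^{1/p}$. For each $\chi\ne 1$, $\lambda_\chi$ is a sum of $k$ independent mean-zero random variables bounded by $2$, so Khintchine--Rosenthal gives $\E|\lambda_\chi|^p\le (Cpk)^{p/2}$ for all $p\ge 2$ and a universal $C$. Summing over the $n-1$ non-trivial characters yields $\E\sum_{\chi\ne 1}|\lambda_\chi|^p\le n(Cpk)^{p/2}$, and Markov's inequality with a suitable inflation factor delivers $\bigl(\sum_{\chi\ne 1}|\lambda_\chi|^p\bigr)^{1/p}\le C'\sqrt{pk}\,n^{1/p}$ with probability at least $1-2^{-\Omega(p)}$. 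A union bound over integer $p\in\{2,3,\dots,\lceil\log n\rceil\}$ keeps the total failure probability below $1/2$ for $C'$ large enough.

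On this good event, the last displayed inequality collapses to $\lesssim\sqrt{p/k}\cdot|S|^{1/p}$. Choosing the exponent $p(S)=\max(2,\lceil\log|S|\rceil)$ forces $|S|^{1/p}\le e$, and the hypothesis $k\ge c\log n/\e^2$ then yields a bound of order $\e\sqrt{\log|S|/\log n}/\sqrt{c}$, giving \eqref{eq:sqrt} once $c$ is chosen large enough (also large enough to cover the extreme case $|S|=2$, where one is forced to take $p=2$). The main obstacle is precisely this simultaneity requirement: because the weight $|\widehat{\1_S}(\chi)|^2$ depends on $S$, no single exponent $p$ works for all sets at once, and one must control the whole scale $\bigl\{\bigl(\sum_{\chi\ne 1}|\lambda_\chi|^p\bigr)^{1/p}\bigr\}_{2\le p\le \log n}$ simultaneously --- which is what the union bound over integer $p$, at the negligible cost of a $\log n$ factor absorbed by the Markov tail, accomplishes.
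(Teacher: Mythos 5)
Your proposal is correct, and it follows the same high-level scheme as the paper --- reduce the isoperimetric deviation to the $\ell^p$-norm of the nontrivial eigenvalues via a weighted H\"older/interpolation step, establish a uniform-in-$p$ high-probability bound on that random $\ell^p$-norm, and then optimize $p\asymp\log|S|$ set by set --- but you implement each ingredient by a deliberately scalar argument that skirts the Banach-space machinery. In place of the Riesz--Thorin interpolation of Lemma~\ref{lem:interpolated} you interpolate $\widehat{\1_S}$ directly between Parseval and the $L^\infty$ bound; in the Abelian case ($r=q=\infty$) this yields the same exponent $|S|^{1+1/p}n^{-1/p}$. More substantially, in place of the $S_p$-valued martingale moment inequality (Theorem~\ref{thm:moment}, used in Lemma~\ref{lem:AR schatten}) you observe that for Abelian $\Gamma$ the centred adjacency operator diagonalizes in the character basis, so $\left\|(I-\tfrac{1}{n}J)A\right\|_{S_p}^p=\sum_{\chi\ne 1}|\lambda_\chi|^p$ decouples into a coordinatewise sum of (for each fixed $\chi$) independent mean-zero bounded scalar walks, to which ordinary scalar Khintchine/Rosenthal gives $\E|\lambda_\chi|^p\le (Cpk)^{p/2}$; summing over $\chi$ reproduces exactly the paper's estimate $\E\left\|(I-\tfrac{1}{n}J)A\right\|_{S_p}^p\le (cp/k)^{p/2}n$. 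Your union bound over integer $p\in\{2,\ldots,\lceil\log n\rceil\}$ is a harmless variant of the aggregated-Markov trick the paper uses in Lemma~\ref{lem:AR schatten} (which handles all $p\ge 2$ simultaneously via $\sum_p 2^{-p}$, but the finite range suffices since the chosen $p$ never exceeds $\lceil\log n\rceil$ when $|S|\le n/2$). The trade-off is clear: your scalar route is entirely elementary and self-contained --- no modulus of uniform smoothness, no $S_p$ geometry --- but it is genuinely restricted to Abelian $\Gamma$, where the eigenvalues decouple into scalar sums. The paper's Lemma~\ref{lem:AR schatten} holds for \emph{all} finite groups, and the Banach-valued Azuma inequality behind it is the general tool the paper means to showcase; the Abelian hypothesis in Theorem~\ref{thm:our} enters only through the $L^\infty$ eigenbasis bound required by Lemma~\ref{lem:interpolated}, not through the Schatten-norm concentration step.
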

The possible validity of an estimate such as~\eqref{eq:sqrt} for
finite groups $\Gamma$ that are not necessarily Abelian remains an
interesting open question.

%We will explain presently how the fact
%that $\Gamma$ is Abelian enters into our proof of
%Theorem~\ref{thm:our}.

When $k=o(\log n)$, the graphs $G$ of Theorem~\ref{thm:our} need not be connected, and they are never expanders~\cite[Prop.~3]{AR94}. Nevertheless, with positive probability, sufficiently small sets in such graphs do have a large edge boundary:
\begin{theorem}\label{thm:small k}
There exists a universal constant $c\in (0,\infty)$ with the
following property. Let $\Gamma$ be an Abelian group of cardinality
$n$. Fix $k\in \N$ and $\e\in (0,1)$. Then with probability at least
$\frac12$ over $g_1,\ldots,g_k$ chosen independently and uniformly
at random from $\Gamma$, if $G$ is the Cayley graph associated to
$g_1,\ldots,g_k$, then for every $\emptyset \neq S\subsetneq \Gamma$
%with $ 2\le |S|\le e^{c\e^2k}$,
we have
\begin{equation*}
 |S|\le e^{c\e^2k}\implies \left|\frac{E_G(S,V\setminus S)}{\frac{2k}{n}|S|(n-|S|)}-1\right|\le
\e.
\end{equation*}
\end{theorem}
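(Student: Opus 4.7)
Write $\chi_S:\Gamma\to\{0,1\}$ for the indicator of $S$ and set $B := A - \mathbb{E}A$, so that $\chi_S^\top B\,\chi_S = E_G(S,V\setminus S) - (2k/n)|S|(n-|S|)$; the inequality to be proved amounts to a uniform upper bound on $|\chi_S^\top B\,\chi_S|$ for all admissible $S$. Since $\Gamma$ is abelian, the characters $\psi \in \hat\Gamma$ simultaneously diagonalize $A$ and $\mathbb{E}A$: setting $\lambda_\psi := 2\,\mathrm{Re}\sum_{i=1}^k \psi(g_i)$ and $\widehat{\chi_S}(\psi) := \sum_{u\in S}\overline{\psi(u)}$, the trivial character is killed by the subtraction $A - \mathbb{E}A$, and Plancherel gives
\[
\chi_S^\top B\,\chi_S \;=\; \frac{1}{n}\sum_{\psi \neq 1} |\widehat{\chi_S}(\psi)|^2\,\lambda_\psi.
\]

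The crucial analytic step is to convert this identity into a Schatten-norm estimate. For any $q \ge 2$, H\"older in $\ell_q(\hat\Gamma)$ paired with the finite-group Hausdorff--Young bound $\|\widehat{\chi_S}\|_{\ell_{2q/(q-1)}} \le n^{(q-1)/(2q)} |S|^{(q+1)/(2q)}$ yields
\[
\bigl|\chi_S^\top B\,\chi_S\bigr| \;\le\; |S| \Bigl(\frac{|S|}{n}\Bigr)^{\!1/q} \|B\|_{S_q}.
\]
This is the ``small-set isoperimetry $\leftrightarrow$ Schatten norm'' connection advertised in the abstract: the factor $(|S|/n)^{1/q}$ improves on the spectral-gap estimate $|S|\cdot\|B\|_{\mathrm{op}}$ precisely when $|S|\ll n$, and the improvement grows as $q$ decreases.

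To bound $\|B\|_{S_q}$, decompose $B = \sum_{i=1}^k Z_i$ with $Z_i := P_{g_i} + P_{-g_i} - (2/n)J$ i.i.d.\ mean-zero symmetric matrices; each $Z_i$ has $n-1$ eigenvalues in $[-2,2]$ on the nontrivial spectrum and $0$ on the trivial character, so $\|Z_i\|_{S_q} \le 2n^{1/q}$. The Banach-space valued Azuma inequality in the $2$-uniformly smooth Schatten class $S_q$ (smoothness constant $\sqrt{q-1}$) --- the paper's main probabilistic input --- then produces
\[
\bigl(\mathbb{E}\|B\|_{S_q}^2\bigr)^{\!1/2} \;\lesssim\; \sqrt{qk}\,n^{1/q},
\]
and Markov's inequality promotes this to the single event $\mathcal{E}_q := \{\|B\|_{S_q} \le C_0 \sqrt{qk}\,n^{1/q}\}$ of probability at least $\tfrac{1}{2}$, for some absolute constant $C_0$.

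On $\mathcal{E}_q$, for every $S$ with $m := |S| \le n/2$ the two displays combine into
\[
\frac{|\chi_S^\top B\,\chi_S|}{(2k/n)\,m(n-m)} \;\le\; C_0\,m^{1/q}\sqrt{q/k}.
\]
Choosing $q := \varepsilon^2 k/(4C_0^2)$ makes $\sqrt{q/k} = \varepsilon/(2C_0)$, and for every $m \le e^{c\varepsilon^2 k}$ with $c$ a sufficiently small absolute constant (depending only on $C_0$) the factor $m^{1/q}$ is bounded by an absolute constant, forcing the ratio below $\varepsilon$. The complementary range $n/2 < |S| \le e^{c\varepsilon^2 k}$ (only relevant when $e^{c\varepsilon^2 k} > n/2$) follows from the symmetries $E_G(S,V\setminus S) = E_G(V\setminus S, S)$ and $|S|(n-|S|) = |V\setminus S|(n - |V\setminus S|)$, which transfer the estimate to the smaller of $S$ and $V\setminus S$. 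The main obstacle --- and the heart of the paper --- is the Schatten-norm moment inequality of the third paragraph, i.e.\ the Banach-space Azuma with the correct $\sqrt{q-1}$ dependence on $q$; once it is in hand, the derivation above is essentially the same one used for Theorem~\ref{thm:our}, differing only in how $q$ is optimized in each regime.
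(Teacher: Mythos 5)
Your proof is correct, and modulo presentation it is the paper's argument: bound the edge-count deviation by a Schatten $q$-norm of the centered adjacency matrix, control that norm via the vector-valued Azuma/moment inequality (Theorem~\ref{thm:moment}) applied in the $2$-smooth space $S_q$, and take $q\asymp\e^2 k$ so that $m^{1/q}$ is $O(1)$ once $m\le e^{c\e^2 k}$. The differences are cosmetic: you obtain the Schatten--isoperimetry link by Plancherel, H\"older, and the elementary bound $\|\widehat{\chi_S}\|_{\ell_r}\le n^{1/r}|S|^{1-1/r}$ directly in the Abelian setting (with the uncentered $\chi_S$, which works because the trivial character contributes $0$ to $B$), whereas the paper proves the more general Lemma~\ref{lem:interpolated} via Riesz--Thorin and applies its specialization Lemma~\ref{lem:interpolated infty} to the centered vector $(n-|S|)\1_S-|S|\1_{V\setminus S}$; and your single-$q$ Markov step replaces Lemma~\ref{lem:AR schatten}, whose uniformity over all integer $p\ge 2$ is needed for Theorem~\ref{thm:our} but not here.
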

The main purpose of this note is {\em not} to obtain results such as
Theorem~\ref{thm:our} and Theorem~\ref{thm:small k}, even
if these have independent interest. Our goal is rather to present
a method to prove expansion of small sets,
going beyond the standard spectral gap techniques. In
addition, we highlight a simple and general geometric argument that
allows one to reason about such questions for random objects like
Alon-Roichman graphs. The rest of this introduction will therefore
be devoted to a description these issues. We note that a draft of this manuscript
has been circulating for several years, and we were motivated
to make it publicly available since the ideas presented here
inspired recent progress in theoretical computer science;
see~\cite{ABS10}.

\subsection{Schatten bounds and small set expansion}

For an $n$-vertex graph $G=(V,E)$ and $u,v\in V$, let $e(u,v)$ denote the number of edges joining $u$ and $v$ if $u\neq v$, and twice the number of self loops at $u$ if $u=v$. If $G$ is $d$-regular, then the normalized adjacency matrix of $G$ is the $n\times n$ matrix $A(G)$ whose entry at $(u,v)\in V\times V$ equals $e(u,v)/d$. We will denote by $1=\lambda_1(G)\ge \lambda_2(G)\ge \cdots \ge
\lambda_n(G)$ the decreasing rearrangement of eigenvalues of
$A(G)$.

The well-established connection, due to~\cite{AM85,Tan84} (see also~\cite[Thm.~9.2.1]{AS00}), between spectral gaps and graph expansion, reads as follows: for every $\emptyset \neq S\subsetneq V$ we have
\begin{equation}\label{eq:mixing}
\left|\frac{E_G(S,V\setminus S)}{\frac{d}{n}|S|(n-|S|)}-1\right|\le
\max_{i\in \{2,\ldots,n\}}|\lambda_i(G)|.
\end{equation}

Let $L_2(V)$ denote the vector space $\R^V$, equipped with the scalar product
$$
\forall\ x,y:V\to \R,\quad \langle x,y\rangle
\stackrel{\mathrm{def}}{=} \frac{1}{n}\sum_{u\in V}x(u)y(u).
$$
The following lemma is a natural variant of the
bound~\eqref{eq:mixing}.
\begin{lemma}\label{lem:interpolated infty}
Fix $p\in [1,\infty]$. Assume that $L_2(V)$ has an orthonormal
eigenbasis of $A(G)$ consisting of vectors all of whose entries are
bounded by $1$ in absolute value. Then for every $\emptyset\neq
S\subsetneq V$ we have,
$$
\left|\frac{E(S,V\setminus S)}{\frac{d}{n}|S|(n-|S|)}-1\right|\le
\left(\sum_{i=2}^n|\lambda_i(G)|^p\right)^{\frac{1}{p}}\left(\frac{|S|^{\frac{1}{p+1}}(n-|S|)^{\frac{p}{p+1}}+
|S|^{\frac{p}{p+1}}(n-|S|)^{\frac{1}{p+1}}}{n}\right)^{\frac{p+1}{p}}.
$$
\end{lemma}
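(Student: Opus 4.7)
The plan is to expand in the orthonormal eigenbasis given by hypothesis, apply H\"older's inequality to separate the Schatten-type weight $|\lambda_i|^p$ from the Fourier coefficients $\alpha_i=\langle \mathbf{1}_S,u_i\rangle$, and use the $\ell_\infty$-bound on the eigenvectors to bound those coefficients. Concretely, let $u_1=\mathbf{1},u_2,\ldots,u_n$ be the hypothesized orthonormal eigenbasis of $A(G)$, with eigenvalues $1=\lambda_1\geq\cdots\geq\lambda_n$ and $\|u_i\|_\infty\leq 1$. Using $A\mathbf{1}=\mathbf{1}$ and $\mathbf{1}_{V\setminus S}=\mathbf{1}-\mathbf{1}_S$, a direct computation gives
\[
E_G(S,V\setminus S)=dn\langle \mathbf{1}_S,A\mathbf{1}_{V\setminus S}\rangle=\frac{d}{n}|S|(n-|S|)-dn\sum_{i=2}^n \lambda_i \alpha_i^2;
\]
with $s=|S|/n$ and $t=(n-|S|)/n$, the left-hand side of the lemma is thus $\bigl|\sum_{i\geq 2}\lambda_i\alpha_i^2\bigr|/(st)$, and by the $S\leftrightarrow V\setminus S$ symmetry I may assume $s\leq t$.

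H\"older's inequality with conjugate exponents $p$ and $q=p/(p-1)$ now yields
\[
\sum_{i=2}^n |\lambda_i|\alpha_i^2 \leq \Bigl(\sum_{i=2}^n |\lambda_i|^p\Bigr)^{1/p} \Bigl(\sum_{i=2}^n |\alpha_i|^{2q}\Bigr)^{1/q}.
\]
The hypothesis $\|u_i\|_\infty\leq 1$, via $\alpha_i=\frac{1}{n}\sum_{v\in S}u_i(v)$, gives $|\alpha_i|\leq s$; and for $i\geq 2$, orthogonality to $\mathbf{1}$ gives $\alpha_i=-\langle \mathbf{1}_{V\setminus S},u_i\rangle$, whence also $|\alpha_i|\leq t$. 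Combining $|\alpha_i|\leq s$ with Parseval's identity $\sum_{i\geq 2}\alpha_i^2=st$ then bounds $\sum_{i\geq 2}|\alpha_i|^{2q}\leq s^{2q-2}\cdot st$, so that after dividing through by $st$,
\[
\frac{1}{st}\sum_{i=2}^n |\lambda_i|\alpha_i^2 \leq \Bigl(\sum_{i=2}^n |\lambda_i|^p\Bigr)^{1/p}\cdot (s/t)^{1/p}.
\]

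The remaining step, which I expect to be the main obstacle, is the purely algebraic task of comparing this asymmetric bound against the symmetric form in the statement. It suffices to prove the elementary inequality
\[
(s/t)^{1/p}\leq \bigl(s^{1/(p+1)}t^{p/(p+1)}+s^{p/(p+1)}t^{1/(p+1)}\bigr)^{(p+1)/p}\qquad (0<s\leq t,\ s+t=1).
\]
Raising both sides to the power $p/(p+1)$, then dividing by $s^{1/(p+1)}$, multiplying by $t^{1/(p+1)}$, and finally using $s+t=1$, this reduces to $s^{2/(p+1)}\leq t^{2/(p+1)}$, which is exactly the assumption $s\leq t$.
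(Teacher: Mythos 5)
Your proof is correct, and it takes a genuinely more elementary route than the paper. The paper does not prove Lemma~\ref{lem:interpolated infty} directly: it proves the more general Lemma~\ref{lem:interpolated} via Riesz--Thorin interpolation for the operator $T(x)=(\langle x,e_1\rangle,\ldots,\langle x,e_n\rangle)$ between the endpoints $L_2(V)\to\ell_2^n$ and $L_{r/(r-1)}(V)\to\ell_q^n$, applies it to the already-mean-zero test vector $x=(n-|S|)\mathbf 1_S-|S|\mathbf 1_{V\setminus S}$, and then obtains Lemma~\ref{lem:interpolated infty} as the limiting case $q=r=\infty$. You instead work with $\mathbf 1_S$ directly, strip off the $e_1$-component, and replace Riesz--Thorin by the ``hand interpolation'' $\sum_{i\ge 2}|\alpha_i|^{2q}\le\big(\max_i|\alpha_i|\big)^{2q-2}\sum_{i\ge 2}\alpha_i^2\le s^{2q-2}\cdot st$, which amounts to interpolating the pointwise bound $|\alpha_i|\le s$ (the $L_1\to\ell_\infty$ estimate evaluated at $\mathbf 1_S$) against Parseval (the $L_2\to\ell_2$ estimate). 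The price is that you initially obtain the asymmetric bound $(s/t)^{1/p}$ (with $s\le t$), and you must close the gap against the symmetric expression in the statement via the short algebraic inequality at the end; the paper sidesteps this by symmetrizing the test vector up front. Your argument is shorter and self-contained for the $L_\infty$ case, but does not extend as cleanly to the general $(r,q)$ version of Lemma~\ref{lem:interpolated}, which the paper needs only to state, not to use.

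One small remark: the bound you prove, $(\sum_{i\ge2}|\lambda_i|^p)^{1/p}\big(\min\{s,t\}/\max\{s,t\}\big)^{1/p}$, is in fact slightly sharper than the one in the lemma; your final reduction verifies that it indeed implies the stated form.
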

See Lemma~\ref{lem:interpolated} below for a more general version of Lemma~\ref{lem:interpolated infty}, which does not require the existence of an eigenbasis with good $L_\infty$ bounds. We chose to state the above simpler version of Lemma~\ref{lem:interpolated} in the introduction, since the assumption of Lemma~\ref{lem:interpolated infty} holds automatically for Cayley graphs of Abelian groups, the eigenbasis being the characters of the group.

Alon and Roichman~\cite{AR94}, as well as the subsequent
work~\cite{Pak99,LR04,LS04,WX08,CM08}, proved Theorem~\ref{thm:AR}
by showing that, under the assumptions appearing in the statement of
Theorem~\ref{thm:AR}, we have $\max_{i\in
\{2,\ldots,n\}}|\lambda_i(G)|\le \e$, and then appealing
to~\eqref{eq:mixing}. We will prove Theorem~\ref{thm:our} and
Theorem~\ref{thm:small k} by applying Lemma~\ref{lem:interpolated
infty} with an appropriate choice of $p$ (depending on the
cardinality of $S$). With this goal in mind, we need to be able to
argue about the quantity
$\left(\sum_{i=2}^n|\lambda_i(G)|^p\right)^{1/p}$ when $G$ is the
random graph appearing in Theorem~\ref{thm:our}. This can be done
via simple geometric considerations from Banach space theory.

\subsection{Banach space valued concentration}

The singular values of an $n\times n$ matrix $A$, i.e., the
eigenvalues of $\sqrt{A^*A}$, are denoted $s_1(A)\ge
s_2(A)\ge\cdots\ge s_n(A)$. For $p\in [1,\infty]$, the Schatten
$p$-norm of $A$, denoted $\|A\|_{S_p}$, is defined to be
$\left(\sum_{i=1}^ns_i(A)^p\right)^{1/p}$. Thus, the quantity
$\left(\sum_{i=2}^n|\lambda_i(G)|^p\right)^{1/p}$ appearing in
Lemma~\ref{lem:interpolated infty} equals $
\left\|\left(I-\frac{1}{n}J\right)A(G)\right\|_{S_p}, $ where $I$ is
the $n\times n$ identity matrix and $J$ is the $n\times n$ matrix
all of whose entries are $1$.

Fix a group $\Gamma$ of cardinality $n$. Let $R:\Gamma\to
GL(L_2(\Gamma))$ be the right regular representation, i.e.,
$(R(g)\phi)(h)=\phi(gh)$ for every $\phi:\Gamma\to \R$ and $g,h\in
\Gamma$. The normalized adjacency matrix of the Cayley graph associated to $g_1,\ldots,g_k\in \Gamma$ is given by
\begin{equation}\label{eq:def A(g_i)}
A(g_1,\ldots,g_k)\eqdef \frac{1}{2k}\sum_{i=1}^k
\left(R(g_i)+R(g_i^{-1})\right).
\end{equation}
In order to apply Lemma~\ref{lem:interpolated infty}, we are therefore interested in the random quantity
\begin{equation}\label{eq:the S_p norm}
\left\|\sum_{i=1}^k\left(I-\frac{1}{n}J\right)\frac{R(g_i)+R(g_i^{-1})}{2k}\right\|_{S_p}.
\end{equation}

All the known proofs of the Alon-Roichman theorem, corresponding to
the case $p=\infty$ in~\eqref{eq:the S_p norm},  proceed by proving
the desired deviation inequality for operator norm valued random
variables; the original proof of Alon and Roichman uses the Wigner
semicircle method, while later proofs rely on the Ahlswede-Winter
matrix valued deviation bound~\cite{AW02}. Alternatively, one can
use in this context the moment inequalities arising from the
non-commutative Khinchine inequalities of Lust-Piquard and
Pisier~\cite{Lust86,L-PP91}, and this method also yields the
inequalities that we need for the deviation of the Schatten $p$-norm
in~\eqref{eq:the S_p norm}. Nevertheless, all of these approaches
are specific to operator-valued random variables, and are deeper
than the simple argument that we present below. It turns out that
for our purposes, it suffices to use an elementary geometric
argument that ignores the specific structure of matrix spaces---it
works for random variables taking values in arbitrary uniformly
smooth normed spaces, of which the Schatten $p$-norms are a special
case.

For a Banach space $(X,\|\cdot\|)$, the triangle inequality implies
that $\|x+\tau y\|+\|x-\tau y\|\le 2+ 2\tau$ for every two unit
vectors $x,y\in X$ and every $\tau>0$. $X$ is said to be uniformly
smooth if $\|x+\tau y\|+\|x-\tau y\|\le 2+ o(\tau)$, where the
$o(\tau)$ term is independent of the choice of unit vectors $x,y\in
X$. Formally, consider the following quantity, called the modulus of
uniform smoothness of $X$.
\begin{equation}\label{eq:def rho}
\rho_X(\tau)\stackrel{\mathrm{def}}{=}\sup\left\{\frac{\|x+\tau
y\|+\|x-\tau y\|}{2}-1:\ x,y\in X,\ \|x\|=\|y\|=1\right\}.
\end{equation}
Then $X$ is uniformly smooth if $\lim_{\tau\to
0}\frac{\rho_X(\tau)}{\tau}=0$.

$X$ is said to have a modulus of smoothness of power type $2$ if
there exists $s>0$ such that for all $\tau>0$ we have
$\rho_X(\tau)\le s\tau^2$. For simplicity, we will only deal here
with spaces that have a modulus of smoothness of power type $2$. All
of our results below carry over, with obvious modifications, to
general uniformly smooth spaces (of course, in this more general
setting, the probabilistic bounds that we get will no longer be
sub-Gaussian). For concreteness, if $p\ge 2$ and $S_p$ denotes the
space of all $n\times n$ matrices equipped with the Schatten
$p$-norm, then for every $\tau>0$ we have $\rho_{S_p}(\tau)\le
\frac{p-1}{2}\tau^2$. The fact that the modulus of smoothness of
$S_p$ has power type $2$ when $p\ge 2$ was first proved by
Tomczak-Jaegermann in~\cite{TJ74}. The exact modulus of smoothness
of $S_p$ was computed in~\cite{BCN94}.
%In particular, when $\tau\to
%0$ we have $\rho_{S_p}(\tau)= \frac{p-1}{2}\tau^2+o(\tau^2)$ (though
%we will not need such a precise estimate).
The case of $L_p(\mu)$
spaces is much older---the modulus of smoothness in this case was
computed by Hanner in~\cite{Han56}.

An Azuma-type deviation inequality holds for general norms whose
modulus of smoothness has power type $2$.

\begin{theorem}\label{thm:azuma} There exists a universal constant $c\in (0,\infty)$ with the following property. Fix
$s>0$ and assume that a Banach space $(X,\|\cdot\|)$ satisfies
$\rho_X(\tau)\le s\tau^2$ for all $\tau>0$. Fix also a sequence of
positive numbers $\{a_k\}_{k=1}^\infty\subseteq (0,\infty)$. Let
$\{M_k\}_{k=1}^\infty\subseteq X$ be an $X$-valued martingale
satisfying the pointwise bound $\|M_{k+1}-M_k\|\le a_k$ for all
$k\in \N$. Then for every $u>0$ and $k\in \N$ we have
%\begin{equation}\label{eq:u condition}
%u\ge \sqrt{s(a_1^2+\cdots+a_n^2)},
%\end{equation}
%we have
\begin{equation}\label{eq:azuma e^s}
\Pr\left[\|M_{k+1}-M_1\|\ge u\right]\le e^{s+2} \cdot
\exp\left(-\frac{cu^2}{a_1^2+\cdots+a_k^2}\right).
\end{equation}
\end{theorem}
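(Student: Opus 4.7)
The plan is to follow the classical Azuma template adapted to Banach-valued martingales: establish a sub-Gaussian surrogate for the moment-generating function of $\|M_{k+1}-M_1\|$, and then apply Markov's inequality. I find it cleaner to work with high moments rather than exponentials. The key estimate I would aim for is
$$
\bigl(\E\|M_{k+1}-M_1\|^q\bigr)^{1/q}\;\le\;C\sqrt{sq}\,\sqrt{a_1^2+\cdots+a_k^2}\qquad(q\ge 2),
$$
with $C$ a universal constant. Granting this, Markov's inequality gives $\Pr[\|M_{k+1}-M_1\|\ge u]\le (C\sqrt{sq(a_1^2+\cdots+a_k^2)}/u)^q$, and optimizing by setting $q$ proportional to $u^2/(s(a_1^2+\cdots+a_k^2))$ produces a sub-Gaussian tail of the form \eqref{eq:azuma e^s}. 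The prefactor $e^{s+2}$ in the theorem absorbs the loss in the small-$u$ regime, where the constraint $q\ge 2$ is binding and one simply uses the trivial bound $\Pr[\cdot]\le 1$.

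The moment estimate I would establish in two stages. First I would convert the hypothesis $\rho_X(\tau)\le s\tau^2$ into the 2-smoothness parallelogram inequality
$$
\|x+y\|^2+\|x-y\|^2\;\le\;2\|x\|^2+Cs\|y\|^2\qquad(x,y\in X),
$$
which is a standard (if not entirely trivial) consequence of the definition of $\rho_X$. Applying this pointwise bound with $x=M_k-M_1$ and $d_k:=M_{k+1}-M_k$, then passing to an independent symmetrized copy $d_k'$ of the martingale difference (so that $\tfrac12(d_k-d_k')$ is symmetric) and taking conditional expectations given the natural filtration, yields the telescoping $L^2$ bound $\E\|M_{k+1}-M_1\|^2\le Cs\sum_{j=1}^k a_j^2$. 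This is precisely the $q=2$ case.

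To bootstrap from $q=2$ to general $q\ge 2$, I would apply the same scheme to the convex functional $\Psi_q(x)=\|x\|^q$. Smoothness of $X$ of power type $2$ upgrades, via Pisier's argument, to smoothness of the Bochner space $L^q(X,\mathcal F)$ of power type $2$ with constant $O(\sqrt{sq})$; equivalently, one obtains the second-order estimate
$$
\Psi_q(x+y)+\Psi_q(x-y)\;\le\;2\Psi_q(x)+Csq(q-1)\|x\|^{q-2}\|y\|^2.
$$
Iterating along the martingale, inserting H\"older's inequality at each step to convert $\E[\|M_k-M_1\|^{q-2}\|d_k\|^2]$ into the product of $(\E\|M_k-M_1\|^q)^{(q-2)/q}$ and $\|d_k\|_\infty^2$, then solving the resulting recursion, delivers the desired $\sqrt q$ scaling. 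The main obstacle is this $L^q$ upgrade: the second-order expansion of $\Psi_q$ is not an immediate consequence of 2-smoothness of $X$ alone---the linear term in $y$ fails to vanish pointwise and must be killed by the symmetrization trick above, after which H\"older's inequality and induction on $q$ (or, alternatively, a single invocation of Pisier's inequality in $L^q(X)$) close the argument. Once the $\sqrt q$ scaling is in hand, the optimization producing \eqref{eq:azuma e^s} is routine, and the constant $c$ emerges from that optimization.
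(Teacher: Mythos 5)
Your high-level strategy---bound the $q$-th moment by $O(\sqrt{q})\cdot\sqrt{\sum_j a_j^2}$, apply Markov's inequality, optimize over $q\ge\max\{2,s\}$, and absorb the small-$u$ regime into the prefactor $e^{s+2}$---is exactly the paper's route (Theorem~\ref{thm:moment} followed by the short argument given for Theorem~\ref{thm:azuma}). The problem is with the two-point estimate you propose as the engine of the moment bound, namely
\begin{equation*}
\|x+y\|^q+\|x-y\|^q\;\le\;2\|x\|^q+Csq(q-1)\,\|x\|^{q-2}\|y\|^2.
\end{equation*}
This is false: take $x=0$, $y\ne 0$, $q>2$; the left-hand side is $2\|y\|^q>0$ while the right-hand side vanishes. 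In the martingale recursion this is not a removable degeneracy, since the very first step has $M_1-M_1=0$, and your recursion would then deliver $\E\|M_2-M_1\|^q\le 0$. The correct two-point inequality---the paper's Lemma~\ref{lem:two point}---is
\begin{equation*}
\frac{\|x+y\|^q+\|x-y\|^q}{2}\;\le\;\bigl(\|x\|^2+5(s+q)\|y\|^2\bigr)^{q/2},
\end{equation*}
whose right-hand side behaves like your Taylor-type expression when $\|y\|\ll\|x\|$ but grows like $(C(s+q))^{q/2}\|y\|^q$ when $\|y\|\gg\|x\|$; it is exactly this second regime that your bound omits. One can patch your recursion by adding the missing $\|y\|^q$ term and carrying it through the induction, but the paper's form handles both regimes at once.

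Your alternative---``a single invocation of Pisier's inequality in $L^q(X)$''---is, in substance, what the paper does: Lemma~\ref{lem:two point} is used to show that $L_q(X,\Omega,\mu)$ is itself $2$-smooth with constant $O(s+q)$ (Corollary~\ref{coro:L_q}), and then the elementary deterministic Lemma~\ref{lem:induction} is applied \emph{inside} $L_q(X)$, the required monotonicity $\|S_n-Z_{n+1}\|_{L_q(X)}\ge\|S_n\|_{L_q(X)}$ being supplied by the conditional Jensen computation~\eqref{eq:to take}. In particular, no symmetrization is needed: the martingale property already kills the linear term in $y$ after conditioning, so passing to an independent copy $d_k'$ of the increment, as you suggest, is an unnecessary detour. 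Your reading of the $e^{s+2}$ prefactor (it covers the range of $u$ where the constraint $q\ge\max\{2,s\}$ is binding and the trivial bound $\Pr[\cdot]\le 1$ is used) agrees with the paper's optimization step.
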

Theorem~\ref{thm:azuma} is a consequence of well understood moment
inequalities in Banach space theory. The key insights here are due
to the work of Pisier. Theorem~\ref{thm:azuma} relies on an estimate
of implicit constants appearing in Pisier's inequality~\cite{Pis75};
this is done in Section~\ref{sec:azuma}. While these bounds are not
available in~\cite{Pis75}, undoubtedly Pisier could have computed
them if they were needed for the purpose of his investigations
in~\cite{Pis75}. Therefore, Section~\ref{sec:azuma} should be viewed
as mainly expository. In addition to obtaining the estimates that we
need, another purpose of Section~\ref{sec:azuma} is to present the
proof in a way which highlights the clarity and simplicity of the
general geometric argument leading to Theorem~\ref{thm:azuma}.

Note that the exponential dependence on $s$ in~\eqref{eq:azuma e^s}
cannot be improved. A roundabout way to see this is to note that
when $X$ is the space of $n\times n$ matrices equipped with the
Schatten $p$ norm, then since in this case $s\le p/2$, and for
$p=\log n$ we have $\|\cdot\|_{S_p}\asymp \|\cdot\|_{S_\infty}$ ($=$
the operator norm), the inequality~\eqref{eq:azuma e^s} corresponds
(for this value of $p$) to the Ahlswede-Winter deviation inequality
used in~\cite{LR04,CM08,WX08} to prove the (sharp) logarithmic
dependence of $k$ on $n$ in the Alon-Roichman theorem. For random
variables taking values in the space of matrices equipped with the
operator norm, deeper methods lead to results that are more refined
than Theorem~\ref{thm:azuma}, but do not have an interpretation in
the setting of general uniformly smooth Banach spaces, and are not
needed for our purposes; see~\cite{Tro10} for more information on
this topic.

\section{The Azuma inequality in uniformly smooth normed
spaces}\label{sec:azuma}

Our main result is the following theorem.

\begin{theorem}\label{thm:moment} Fix $s>0$ and $q\ge 2$. Assume that a Banach space $X$ satisfies
$\rho_X(\tau)\le s\tau^2$ for all $\tau>0$. Let
$\{Z_n\}_{n=1}^\infty$ be $X$-valued random variables such that for
all $n\in N$ we have $\E\left[\|Z_n\|^q\right]<\infty$. Denote
$S_n=Z_1+\cdots+Z_n$. Assume that for all $n\in \N$ we have
\begin{equation}\label{eq:geometric condition}
\E\left[\|S_n-Z_{n+1}\|^q\right]\ge \E\left[\|S_n\|^q\right].
\end{equation} Then
\begin{equation}\label{eq:moment bound}
\Big(\E\left[\|S_n\|^q\right]\Big)^{1/q}\le 8\sqrt{s+q}\cdot
\sqrt{\sum_{j=1}^n\left(\E\left[\|Z_j\|^q\right]\right)^{2/q}}.
\end{equation}
\end{theorem}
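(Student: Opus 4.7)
The plan is to adapt Pisier's moment-inequality technique for spaces of power-type-$2$ smoothness, recast so that it only uses the (weaker than martingale) hypothesis \eqref{eq:geometric condition}. The backbone is a Clarkson-type pointwise inequality: for every $x,y\in X$ and every $q\ge 2$,
\begin{equation*}
\|x+y\|^q+\|x-y\|^q \le 2\|x\|^q+\alpha\,\|x\|^{q-2}\|y\|^2+\beta\,\|y\|^q,
\end{equation*}
with $\alpha=O(q(s+q))$ and $\beta=O((s+q)^{q/2})$. To derive this, start from the two-point form of $\rho_X(\tau)\le s\tau^2$, which, after homogeneity, says $\|x+y\|+\|x-y\|\le 2\|x\|+2s\|y\|^2/\|x\|$; upgrade this to the $L^2$-smoothness inequality $\|x\pm y\|^2\le\|x\|^2\pm 2\langle y,x^*\rangle+Ks\|y\|^2$ at a norming functional $x^*$ of $x$; then raise both sides to the power $q/2$ and sum the $\pm y$ versions, controlling $(A+B)^{q/2}+(A-B)^{q/2}-2A^{q/2}$ by $O(q^2)A^{q/2-2}B^2$ plus a remainder absorbable into the $\|y\|^q$ term. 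The $s$ and $q$ dependencies in $\alpha,\beta$ are what must be tracked carefully; this is the main technical obstacle, since the naive Taylor expansion of $(1+u)^{q/2}+(1-u)^{q/2}$ has mixed-sign coefficients that need to be handled by reducing to the regime $|B/A|\le 1/2$ and absorbing the complementary regime into the $\|y\|^q$ term.

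Once the Clarkson-type inequality is in hand, apply it with $x=S_n$ and $y=Z_{n+1}$ and take expectations:
\begin{equation*}
\E\|S_{n+1}\|^q+\E\|S_n-Z_{n+1}\|^q\le 2\E\|S_n\|^q+\alpha\,\E\!\left[\|S_n\|^{q-2}\|Z_{n+1}\|^2\right]+\beta\,\E\|Z_{n+1}\|^q.
\end{equation*}
The hypothesis \eqref{eq:geometric condition} cancels one copy of $\E\|S_n\|^q$ from each side. Then H\"older's inequality gives
$\E[\|S_n\|^{q-2}\|Z_{n+1}\|^2]\le(\E\|S_n\|^q)^{(q-2)/q}(\E\|Z_{n+1}\|^q)^{2/q}$.

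Writing $a_n=(\E\|S_n\|^q)^{2/q}$ and $c_j=(\E\|Z_j\|^q)^{2/q}$, the previous step yields the recursion
\begin{equation*}
a_{n+1}^{q/2}\le a_n^{q/2}+\alpha\,a_n^{(q-2)/2}c_{n+1}+\beta\,c_{n+1}^{q/2}.
\end{equation*}
Set $K\eqdef 64(s+q)$ and prove by induction on $n$ that $a_n\le K\sum_{j=1}^n c_j$. In the induction step, combine the bound $a_n^{q/2}\le K^{q/2}T_n^{q/2}$ (with $T_n=\sum_{j\le n}c_j$) with the elementary convexity estimate $T_{n+1}^{q/2}-T_n^{q/2}\ge(q/2)T_n^{q/2-1}c_{n+1}$ and separately $T_{n+1}^{q/2}-T_n^{q/2}\ge c_{n+1}^{q/2}$; the two choices of $\alpha$ and $\beta$ then exactly match the two pieces provided $K\ge 4\alpha/q$ and $K^{q/2}\ge 2\beta$, both of which are implied by the stated scalings of $\alpha,\beta$ and the choice $K=64(s+q)$. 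Taking $(\,\cdot\,)^{1/2}$ and $(\,\cdot\,)^{1/q}$ yields \eqref{eq:moment bound}.

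In summary: the easy part is the recursion and its induction, which is purely arithmetic; the routine part is the reduction from \eqref{eq:geometric condition} to that recursion via H\"older; the nontrivial part is the Clarkson-type inequality with the correct $s+q$ scaling, which is where the smoothness hypothesis $\rho_X(\tau)\le s\tau^2$ actually enters and which requires honest bookkeeping of constants in the Taylor expansion of $(A\pm B)^{q/2}$.
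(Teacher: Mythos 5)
The recursion you set up, and the induction closing it, are sound; the gap is in the Clarkson-type pointwise inequality that feeds them, which is false with the constants you need. Even in $X=\R$ with $s=\tfrac14$ (so that $\rho_\R(\tau)=\max(0,\tau-1)\le\tfrac14\tau^2$ for all $\tau>0$), take $x=1$ and $y=c=2(\ln q)/q$: then $\|x+y\|^q+\|x-y\|^q=(1+c)^q+(1-c)^q\sim q^2$ as $q\to\infty$, whereas $2\|x\|^q+\alpha\|x\|^{q-2}\|y\|^2+\beta\|y\|^q=2+\alpha c^2+\beta c^q\le 2+O\big((\ln q)^2\big)+o(1)$ if $\alpha=O(q(s+q))$ and $\beta=O\big((s+q)^{q/2}\big)$ (the $\beta c^q$ term is negligible because $(s+q)c^2\to 0$). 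In fact no $\alpha$ that is polynomial in $q$ can work. This is exactly the obstacle you flagged: the middle terms $\binom{q/2}{k}(s+q)^k\|x\|^{q-2k}\|y\|^{2k}$ in the expansion of $\big(\|x\|^2+O(s+q)\|y\|^2\big)^{q/2}$ cannot be absorbed into the two extreme terms at polynomial cost, and the proposed truncation at $|B/A|\le\tfrac12$ still leaves, inside the retained regime, a contribution that grows exponentially in $q$ relative to $1+O(q^2)(B/A)^2$.

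The fix is to not expand. Keep the two-point inequality in the unexpanded form of Lemma~\ref{lem:two point}, $\tfrac12\big(\|x+y\|^q+\|x-y\|^q\big)\le\big(\|x\|^2+5(s+q)\|y\|^2\big)^{q/2}$, take expectations with $x=S_n$, $y=Z_{n+1}$, use \eqref{eq:geometric condition} on the left, and apply Minkowski's inequality in $L_{q/2}(\Omega,\mu)$ on the right. In your notation this gives $\tfrac12\big(a_{n+1}^{q/2}+a_n^{q/2}\big)\le\big(a_n+5(s+q)c_{n+1}\big)^{q/2}$, and convexity of $t\mapsto t^{q/2}$ then yields the linear recursion $a_{n+1}\le a_n+10(s+q)c_{n+1}$; summing gives the theorem (with the slightly better constant $\sqrt{10(s+q)}$). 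This is exactly what the paper does, packaged differently: it proves the $q=2$ statement for deterministic sequences (Lemma~\ref{lem:induction}), uses the same $L_{q/2}$-triangle-inequality step to show that $L_q(X,\Omega,\mu)$ is $2$-smooth with constant $O(s+q)$ (Corollary~\ref{coro:L_q}), and applies the $q=2$ result inside $L_q(X)$. The essential ingredient missing from your write-up is that Minkowski step, which is what lets one bypass the expansion of the power $q/2$ altogether.
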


Before proving Theorem~\ref{thm:moment} we record two concrete
examples. The most important case is when $\{Z_n\}_{n=1}^\infty$
form a martingale difference sequence. In other words, there exists
a filtration $\F_1\subseteq \F_2\subseteq \cdots$ such that
$Z_1,\ldots,Z_n$ are measurable with respect to $\F_{n}$ for all
$n\in \N$, and for $m>n$ we have $\E\left[Z_m\big|\F_n\right]=0$. In
this case, using the notation of Theorem~\ref{thm:moment} and the
convexity of the norm, we see that
\begin{equation}\label{eq:to take}
\E\left[\|S_n-Z_{n+1}\|^q\big|\F_n\right]\ge
\left\|\E\left[S_n-Z_{n+1}\big|\F_n\right]\right\|^q=\|S_n\|^q.
\end{equation}
By taking expectation we get that the assumption~\eqref{eq:geometric
condition} is satisfied.

Another example worth mentioning is when $p$ is an even integer and
$Z_n\in L_p(\Omega,\mu)$ satisfy the point-wise condition $\E
\left[Z_{n+1}S_n^{p-1}\right]\le 0$ for all $n\in \N$. In this case
$$
\E \left[\|S_n-Z_{n+1}\|_p^p\right] =\E
\left[\int_\Omega(S_n-Z_{n+1})^pd\mu\right]\ge \E \left[\int_\Omega
\left(S_n^p-pZ_{n+1}S_n^{p-1}\right)d\mu\right]\ge
\E\left[\|S_n\|_p^p\right].
$$
Therefore the assumption~\eqref{eq:geometric condition} holds for
$q=p$.

\begin{proof}[Proof of Theorem~\ref{thm:azuma}]
An application of Theorem~\ref{thm:moment} to $Z_n=M_{n+1}-M_n$,
together with Markov's inequality, shows that for all $q\ge
\max\{2,s\}$ and $u>0$,
\begin{equation}\label{eq:for optimal q}
\Pr\left[\|M_{k+1}-M_1\|\ge u\right]\le
\left(\frac{16\sqrt{q(a_1^2+\cdots+a_k^2)}}{u}\right)^q.
\end{equation}
The optimal choice of $q$ in~\eqref{eq:for optimal q} is
$q=\frac{u^2}{256e(a_1^2+\cdots+a_k^2)}$, which is an allowed value
of $q$ provided $u^2\ge \max\{s,2\}\cdot 256e(a_1^2+\cdots+a_k^2)$.
This implies~\eqref{eq:azuma e^s}.
\end{proof}

%\bigskip

We now pass to the proof of Theorem~\ref{thm:moment}. We start with
the following lemma, whose proof is a slight variant of the proof of
Proposition 7 in~\cite{BCN94}.

\begin{lemma}\label{lem:two point} Assume that $\rho_X(\tau)\le s\tau^2$ for all
$\tau>0$. Then for every $x,y\in X$ and for every $q\ge 2 $ we
have
\begin{eqnarray}\label{eq:beta}
\frac{\|x+y\|^q+\|x-y\|^q}{2}\le \left(\|x\|^2+5(s+q)\cdot
\|y\|^2\right)^{q/2}.
\end{eqnarray}
\end{lemma}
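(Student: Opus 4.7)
The inequality is essentially a $q$-th moment version of the 2-uniform-smoothness inequality encoded by $\rho_X(\tau)\le s\tau^2$. The plan is to prove it in two stages: first a quadratic ($L^2$-level) estimate, then lift to the $q$-th power by a one-variable calculus inequality.

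\emph{Stage 1: Quadratic estimate.} Applying the hypothesis $\rho_X(\tau)\le s\tau^2$ to the unit vectors $\xi/\|\xi\|$ and $\eta/\|\eta\|$ with $\tau=h\|\eta\|/\|\xi\|$ gives, after multiplying through by $\|\xi\|$,
$$\|\xi+h\eta\|+\|\xi-h\eta\|\le 2\|\xi\|+2sh^2\|\eta\|^2/\|\xi\|\qquad(\xi\ne 0).$$
Squaring this, combining with the triangle-inequality bound $(\|\xi+h\eta\|-\|\xi-h\eta\|)^2\le 4h^2\|\eta\|^2$, and using $A^2+B^2=\tfrac12((A+B)^2+(A-B)^2)$ yields
$$\|\xi+h\eta\|^2+\|\xi-h\eta\|^2\le 2\|\xi\|^2+(4s+2)h^2\|\eta\|^2+2s^2h^4\|\eta\|^4/\|\xi\|^2.$$
Dividing by $h^2$ and letting $h\to 0$ shows that the convex function $\phi(t):=\|x+ty\|^2$ satisfies $\phi''(t)\le (4s+2)\|y\|^2$ almost everywhere. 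The standard convex Taylor inequality $\phi(1)\le\phi(0)+\phi'(0^+)+\tfrac12\sup\phi''$, applied to both $t\mapsto\|x+ty\|^2$ and $t\mapsto\|x-ty\|^2$, yields the one-sided bounds
$$\|x\pm y\|^2\le \|x\|^2\pm 2\langle J(x),y\rangle+(2s+1)\|y\|^2,$$
where $J(x)\in X^*$ is a subgradient of $\tfrac12\|\cdot\|^2$ at $x$, and in particular $|\langle J(x),y\rangle|\le\|x\|\|y\|$.

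\emph{Stage 2: Passage to the $q$-th moment.} Set $a:=\|x\|^2+(2s+1)\|y\|^2$ and $b:=2\langle J(x),y\rangle$, so $\|x\pm y\|^2\le a\pm b$. Since $s\ge 0$, AM--GM gives $|b|\le 2\|x\|\|y\|\le a$, hence $u:=b/a\in[-1,1]$. Raising the Stage 1 estimates to the $q/2$-power and invoking the scalar inequality
$$(1+u)^{q/2}+(1-u)^{q/2}\le 2\bigl(1+(q-1)u^2\bigr)^{q/2},\qquad u\in[-1,1],\ q\ge 2,$$
then rescaling by $a^{q/2}$, yields
$$\|x+y\|^q+\|x-y\|^q\le (a+b)^{q/2}+(a-b)^{q/2}\le 2\bigl(a+(q-1)b^2/a\bigr)^{q/2}.$$
Since $b^2/a\le 4\|x\|^2\|y\|^2/\|x\|^2=4\|y\|^2$ and $2s+1+4(q-1)=2s+4q-3\le 5(s+q)$ (equivalently $3s+q+3\ge 0$), we conclude
$$\frac{\|x+y\|^q+\|x-y\|^q}{2}\le \bigl(\|x\|^2+5(s+q)\|y\|^2\bigr)^{q/2}.$$

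\emph{Main obstacle.} The analytic heart is the one-variable inequality in Stage 2. For even integer $q$ it reduces to the term-by-term Maclaurin-coefficient estimate $\binom{q/2}{2j}\le (q-1)^j\binom{q/2}{j}$ for each $j\ge 1$, which is a short calculation with ratios of Pochhammer symbols; for general $q\ge 2$ one extends by a direct convexity argument on $u\mapsto 2(1+(q-1)u^2)^{q/2}-(1+u)^{q/2}-(1-u)^{q/2}$, checking vanishing to second order at $u=0$ with a positive next-order term and non-negativity at $u=1$. In any event the scalar inequality is very loose: any constant of order $q$ in place of $q-1$ would suffice, which is why the conclusion's factor $5(s+q)$ is comfortable.
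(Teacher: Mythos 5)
Your proof reaches the stated inequality by a route genuinely different from the paper's. The paper normalizes to $\|x\|=1$, $\|y\|\le 1$, writes $\|x\pm y\|=b(1\pm\beta)$ with $b=\tfrac{\|x+y\|+\|x-y\|}{2}$ and $\beta=\tfrac{\|x+y\|-\|x-y\|}{\|x+y\|+\|x-y\|}$, bounds $b\le 1+\rho_X(\|y\|)$ directly from the definition~\eqref{eq:def rho} and the $\beta$-mean by the classical estimate $\theta\le(q-1)\beta^2$ of~\cite[Lem.~1.e.14]{LT79}, and then multiplies the two factors through $(1+u)(1+v)\le\sqrt{1+5(u+v)}$; the duality map $J$ never appears and the norm is never differentiated, and the case $\|y\|>\|x\|$ is handled by an explicit swap at the end. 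You instead first derive the linearized $2$-uniform-smoothness inequality $\|x\pm y\|^2\le\|x\|^2\pm 2\langle J(x),y\rangle+(2s+1)\|y\|^2$ and then lift to the $q$-th power via the scalar inequality $(1+u)^{q/2}+(1-u)^{q/2}\le 2(1+(q-1)u^2)^{q/2}$ applied with $u=b/a$, so no case distinction on $\|x\|$ versus $\|y\|$ is needed. This is conceptually attractive, since it exposes the standard duality-map characterization of power type $2$ smoothness as the driving mechanism, but it invokes more machinery and leaves two spots soft. In Stage~1, an a.e.\ bound $\phi''\le C$ for the convex $\phi(t)=\|x+ty\|^2$ does not by itself give $\phi(1)\le\phi(0)+\phi'(0^+)+\tfrac{C}{2}$, because the nondecreasing $\phi'$ could a priori have a singular part; one must exploit the \emph{pointwise} second-difference bound (e.g.\ by showing $t\mapsto\tfrac{C+\epsilon}{2}t^2-\phi(t)$ has nonnegative lower Schwarz second derivative and is hence convex), and one should also dispose of the degenerate collinear case $x+ty=0$ where your $O(h^4)$ remainder blows up. In Stage~2 your scalar inequality is only sketched for non-even $q$: matching the expansions to second order at $u=0$ and checking positivity at $u=1$ does not establish positivity on all of $[0,1]$. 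Both gaps are repairable, but the paper's route avoids them entirely by quoting the known scalar bound and never leaving the realm of the definition of $\rho_X$, and is the shorter of the two.
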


\begin{proof} Assume first of all that $\|x\|=1$ and $\|y\|\le 1$.
Denote
$$b\stackrel{\mathrm{def}}{=} \frac{\|x+y\|+\|x-y\|}{2}\quad
\mathrm{and}\quad \beta\stackrel{\mathrm{def}}{=}
\frac{\|x+y\|-\|x-y\|}{\|x+y\|+\|x-y\|}. $$ Then $b\ge 1$, since the
function $\tau\mapsto \|x+\tau y\|+\|x-\tau y\|$ is convex and even,
and hence attains its minimum at $\tau=0$. Also, the triangle
inequality implies that $\beta \le \frac{\|y\|}{b}\le \|y\|\le 1.$
If we write
$$\left(\frac{(1+\beta)^q+(1-\beta)^q}{2}\right)^{1/q}=1+\theta$$ then
$\theta\in [0,1]$ and $\theta\le (q-1)\beta^2$. Both of these
inequalities are elementary numerical facts; the  proof of the
latter inequality can be found in many places,
e.g.,~\cite[Lem.~1.e.14]{LT79}.

Now,
\begin{multline*}
\left(\frac{\|x+y\|^q+\|x-y\|^q}{2}\right)^{1/q}
=b\left(\frac{(1+\beta)^q+(1-\beta)^q}{2}\right)^{1/q}\stackrel{(\clubsuit)}{\le}
\left[1+\rho_X(\|y\|)\right]\cdot
[1+\theta]\\\stackrel{(\clubsuit\clubsuit)}{\le}
\sqrt{1+5\left[\rho_X(\|y\|)+\theta\right]}\le
\sqrt{1+5(s+q)\|y\|^2},
\end{multline*}
where in $(\clubsuit)$ we used the definition of $\rho_X$ and
$\theta$ and in $(\clubsuit\clubsuit)$ we used the elementary
inequality $(1+u)(1+v)\le \sqrt{1+5(u+v)}$, which is valid for all
$u,v\in [0,1]$. This proves the assertion of Lemma~\ref{lem:two
point} when $\|y\|\le \|x\|$. When $\|y\|\ge \|x\|$ apply the same
reasoning with the roles of $x$ and $y$ reversed, and use the bound
$\|y\|^2+5(s+q)\cdot \|x\|^2\le \|x\|^2+ 5(s+q)\cdot \|y\|^2$.
\end{proof}

 In what follows, if $(\Omega,\mu)$ is a
measure space then $L_q(X,\Omega,\mu)$ will denote the Banach
space of all functions $f:\Omega\to X$ such that
$$
\|f\|_{L_q(X,\Omega,\mu)}^q\stackrel{\mathrm{def}}{=} \int_\Omega
\|f\|^qd\mu<\infty. $$
% We will slightly abuse notation by denoting
%this space by $L_q(X)$, but this will never cause a confusion since
%the measure space $(\Omega,\mu)$ will play no role, and will be
%clear from the context.
In~\cite{Fig76} it is shown that if $q\ge 2$ and $X$ is $2$-smooth
then $L_q(X,\Omega,\mu)$ is also $2$-smooth. The dependence of the
modulus of smoothness of $L_q(X,\Omega,\mu)$ on $q$ and the modulus
of smoothness of $X$ can be deduced from the proofs in~\cite{Fig76},
but is not stated there explicitly. This dependence is crucial for
us, so we will now show how it easily follows from
Lemma~\ref{lem:two point}.

\begin{corollary}\label{coro:L_q} Assume that $\rho_X(\tau)\le s\tau^2$ for all
$\tau>0$. Then
 $\rho_{L_q(X,\Omega,\mu)}(\tau)\le 4(s+q)\tau^2$  for every $\tau>0$.
\end{corollary}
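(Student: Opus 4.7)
The plan is to apply Lemma~\ref{lem:two point} pointwise inside the integral defining the $L_q$ norm, and then to use the triangle inequality in $L_{q/2}(\Omega,\mu)$ to pull the sum outside. This is exactly the reason Lemma~\ref{lem:two point} was phrased in the form $(\|x\|^2+5(s+q)\|y\|^2)^{q/2}$: the square bracket is a sum of two scalar-valued nonnegative functions, and raising it to the power $q/2$ turns it into an object on which the ($q/2$)-norm can act.

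Concretely, fix $f,g\in L_q(X,\Omega,\mu)$ with $\|f\|_{L_q}=\|g\|_{L_q}=1$ and fix $\tau>0$. I would apply Lemma~\ref{lem:two point} to the pair $(f(\omega),\tau g(\omega))\in X\times X$ for each $\omega\in \Omega$, getting
\begin{equation*}
\frac{\|f(\omega)+\tau g(\omega)\|^q+\|f(\omega)-\tau g(\omega)\|^q}{2}\le \bigl(\|f(\omega)\|^2+5(s+q)\tau^2\|g(\omega)\|^2\bigr)^{q/2}.
\end{equation*}
Integrating over $\Omega$ with respect to $\mu$, the left-hand side becomes $\tfrac12\bigl(\|f+\tau g\|_{L_q}^q+\|f-\tau g\|_{L_q}^q\bigr)$, and the right-hand side is the $L_{q/2}(\Omega,\mu)$-norm (to the power $q/2$) of the scalar function $\|f(\cdot)\|^2+5(s+q)\tau^2\|g(\cdot)\|^2$. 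Since $q\ge 2$, the space $L_{q/2}(\Omega,\mu)$ is a genuine Banach space, so the triangle inequality applies, yielding
\begin{equation*}
\frac{\|f+\tau g\|_{L_q}^q+\|f-\tau g\|_{L_q}^q}{2}\le \bigl(\|f\|_{L_q}^2+5(s+q)\tau^2\|g\|_{L_q}^2\bigr)^{q/2}=\bigl(1+5(s+q)\tau^2\bigr)^{q/2}.
\end{equation*}

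To conclude, I would take $q$-th roots and use the arithmetic mean--$q$-mean inequality to pass from the $q$-mean of $\|f+\tau g\|_{L_q}$ and $\|f-\tau g\|_{L_q}$ down to their ordinary average, and finally apply $\sqrt{1+u}\le 1+u/2$ to deduce
\begin{equation*}
\frac{\|f+\tau g\|_{L_q}+\|f-\tau g\|_{L_q}}{2}-1\le \sqrt{1+5(s+q)\tau^2}-1\le \tfrac{5}{2}(s+q)\tau^2\le 4(s+q)\tau^2.
\end{equation*}
Taking the supremum over admissible $f,g$ gives $\rho_{L_q(X,\Omega,\mu)}(\tau)\le 4(s+q)\tau^2$, as required.

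There is no real obstacle here; the only mild subtlety is ensuring that the triangle inequality is invoked in $L_{q/2}(\Omega,\mu)$, which is why the hypothesis $q\ge 2$ is used. The constant $4$ has room to spare (one gets $5/2$), and the argument carries over verbatim if Lemma~\ref{lem:two point} were to be sharpened.
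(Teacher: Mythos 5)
Your proof is correct and follows essentially the same route as the paper: apply Lemma~\ref{lem:two point} pointwise under the integral, invoke the triangle inequality in $L_{q/2}(\Omega,\mu)$, then pass from the $q$-mean to the arithmetic mean and estimate $\sqrt{1+u}$. The only cosmetic difference is that the paper applies the power-mean inequality at the start rather than at the end, and states the slightly weaker bound $\sqrt{1+5(s+q)\tau^2}\le 1+4(s+q)\tau^2$ directly, whereas you correctly observe that the same computation actually yields the constant $5/2$.
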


\begin{proof}
Fix $f,g\in L_q(X,\Omega,\mu)$ with
$\|f\|_{L_q(X,\Omega,\mu)}=\|g\|_{L_q(X,\Omega,\mu)}=1$ and
$\tau>0$. Then,
\begin{eqnarray}\label{eq:use2point}
\left(\frac{\|f+\tau g\|_{L_q(X,\Omega,\mu)}+\|f-\tau
g\|_{L_q(X,\Omega,\mu)}}{2}\right)^q&\le& \frac{\|f+\tau
g\|_{L_q(X,\Omega,\mu)}^q+\|f-\tau
g\|^q_{L_q(X,\Omega,\mu)}}{2}\nonumber\\&=&\int_\Omega
\frac{\|f+\tau g\|^q+\|f-\tau
g\|^q}{2}d\mu\nonumber\\&\stackrel{(*)}{\le}&\int_\Omega\left(\|f\|^2+5(s+q)\cdot
\tau^2\|g\|^2\right)^{q/2}d\mu\nonumber\\\nonumber
&\stackrel{(**)}{\le}&
\left(\|f\|_{L_q(X,\Omega,\mu)}^2+5(s+q)\tau^2\|g\|_{L_q(X,\Omega,\mu)}^2\right)^{q/2}\\
&=& \left(1+5(s+q)\tau^2\right)^{q/2},
\end{eqnarray}
where in $(*)$ we used Lemma~\ref{lem:two point} and in $(**)$ we
used the triangle inequality in $L_{q/2}(\Omega,\mu)$. It follows
from~\eqref{eq:use2point} that
\begin{equation*}
\frac{\|f+\tau g\|_{L_q(X,\Omega,\mu)}+\|f-\tau
g\|_{L_q(X,\Omega,\mu)}}{2}\le \sqrt{1+5(s+q)\tau^2}\le
1+4(s+q)\tau^2.\qedhere
\end{equation*}
%as required.
\end{proof}

The following lemma goes back to~\cite{Lin63,BG69} (see also
Proposition 2.2 in~\cite{Pis75}).

\begin{lemma}\label{lem:induction}
Assume that $\rho_X(\tau)\le s\tau^2$ for all $\tau>0$. Let
$\{x_n\}_{n=1}^\infty\subseteq X$ be a sequence of vectors in $X$
and for every $n\in \mathbb N$ denote $S_n=x_1+\cdots+x_n$. Assume
that for all $n\in \N$ we have $\|S_n-x_{n+1}\|\ge \|S_n\|$. Then
for every $n\in \N$,
$$
\|S_n\|^2\le 10(s+2)\left(\|x_1\|^2+\cdots+\|x_n\|^2\right).
$$
\end{lemma}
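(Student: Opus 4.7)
The plan is to derive the estimate via a one-step recursion, obtained by applying Lemma~\ref{lem:two point} with the special choice $q=2$, and then iterating.

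First, instantiate Lemma~\ref{lem:two point} with $q=2$: for every $x,y\in X$,
\begin{equation*}
\frac{\|x+y\|^2+\|x-y\|^2}{2}\le \|x\|^2+5(s+2)\|y\|^2.
\end{equation*}
Next, apply this inequality with $x=S_n$ and $y=x_{n+1}$, so that $x+y=S_{n+1}$ and $x-y=S_n-x_{n+1}$. Invoking the standing hypothesis $\|S_n-x_{n+1}\|\ge \|S_n\|$ lets us discard the $\|x-y\|^2$ term on the left in favor of $\|S_n\|^2$, and rearranging gives the one-step recursion
\begin{equation*}
\|S_{n+1}\|^2\le \|S_n\|^2+10(s+2)\|x_{n+1}\|^2.
\end{equation*}

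Finally, iterate the recursion from the base case $\|S_1\|^2=\|x_1\|^2\le 10(s+2)\|x_1\|^2$ (using $10(s+2)\ge 1$). Telescoping the resulting inequalities yields
\begin{equation*}
\|S_n\|^2\le 10(s+2)\sum_{j=1}^n\|x_j\|^2,
\end{equation*}
which is the desired bound.

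There is essentially no serious obstacle here: once Lemma~\ref{lem:two point} is in hand, this is a straightforward deterministic analogue of the standard orthogonality-plus-Pythagoras argument in Hilbert space, with the smoothness constant $s+q$ (specialized to $q=2$) replacing the factor of $1$ one would have in the Hilbertian case. The only point requiring a line of care is the first step of the iteration, where one needs $10(s+2)\ge 1$ to absorb $\|x_1\|^2$ into the final constant.
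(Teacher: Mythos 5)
Your proof is correct and follows the same route as the paper: apply Lemma~\ref{lem:two point} with $q=2$, $x=S_n$, $y=x_{n+1}$, use the hypothesis $\|S_n-x_{n+1}\|\ge\|S_n\|$ to obtain the one-step recursion $\|S_{n+1}\|^2\le\|S_n\|^2+10(s+2)\|x_{n+1}\|^2$, and induct.
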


\begin{proof} Apply Lemma~\ref{lem:two point} with $q=2$, $x=S_n$
and $y=x_{n+1}$ to get that
\begin{eqnarray}\label{eq:almost}
\frac{\|S_{n+1}\|^2+\|S_n\|^2}{2}\stackrel{(\star)}{\le}\frac{\|S_n+x_{n+1}\|^2+\|S_n-x_{n+1}\|^2}{2}\le
\|S_n\|^2+5(s+2)\|x_{n+1}\|^2,
\end{eqnarray}
where in $(\star)$ we used the assumption $\|S_n-x_{n+1}\|\ge
\|S_n\|$. Inequality~\eqref{eq:almost} is equivalent to
$\|S_{n+1}\|^2\le \|S_n\|^2+10(s+2)\|x_{n+1}\|^2$. Therefore
Lemma~\ref{lem:induction} follows by induction.
\end{proof}

\begin{proof}[Proof of Theorem~\ref{thm:moment}]
Lemma~\ref{lem:induction}, combined with Corollary~\ref{coro:L_q},
implies Theorem~\ref{thm:moment}.
\end{proof}

\begin{remark}
{\em Readers who are mainly interested in the case of operator
valued random variables should note that the above argument proves
the general Azuma inequality~\eqref{eq:azuma e^s} in a
self-contained way, except that we quoted the fact that
$\rho_{S_p}(\tau)\lesssim p\tau^2$. We wish to stress that the proof
of this fact is elementary and accessible to non-experts. When
$p=2k$ is an even integer, the sharp estimate on the modulus of
smoothness of $S_p$ was proved in~\cite{TJ74} by expending the
quantity
$\|A+B\|_{S_p}^p+\|A-B\|_{S_p}^p=\mathrm{trace}\left([(A+B)(A^*+B^*)]^k+[(A-B)(A^*-B^*)]^k\right)$,
and estimating the summands in the resulting sum separately. When
$p\ge 2$ is not an even integer, the computation of the sharp
modulus of smoothness of $S_p$ in~\cite{BCN94} is more subtle, but
still elementary. Note that for the purpose of our application to
small set expansion of graphs, the case of even $p$ suffices. Also,
in the case of $S_p$, the above proof is much shorter (yielding
better constants), since the intermediate steps of
Lemma~\ref{lem:two point} and Corollary~\ref{coro:L_q} are not
needed---the inequalities obtained in Lemma~\ref{lem:two point} and
Corollary~\ref{coro:L_q} are the way that $\rho_{S_p}(\tau)$ was
estimated in~\cite{TJ74,BCN94} in the first place. The role of these
elementary intermediate steps is only to relate the standard
definition~\eqref{eq:def rho} of the modulus of uniform smoothness
to inequalities such as~\eqref{eq:beta}, but in the literature, when
one estimates $\rho_{X}(\tau)$, this is often done by
proving~\eqref{eq:beta} directly.}
\end{remark}

\section{Schatten norm bounds and graph expansion}

Let $G=(V,E)$ be an $n$-vertex $d$-regular graph. For $p\ge 1$, the
normed space $L_p(V)$ is the space of all $x\in \R^V$, equipped with
the norm
$$\|x\|_{L_p(V)}\stackrel{\mathrm{def}}{=}
\left(\frac{1}{n}\sum_{u\in V} |x(u)|^p\right)^{1/p}.
$$

In what follows, whenever we refer to an orthornormal  eigenbasis
$e_1,\ldots,e_n$ of the normalized adjacency matrix $A(G)$, it will
always be understood that it is orthonormal in $L_2(V)$,  the
eigenvectors are indexed so that $A(G) e_j=\lambda_j(G)e_j$ for all
$j\in \{1,\ldots,n\}$, and $e_1=\1_V$ (the constant $1$ function).

%We this definition we denote
%$$
%M_p^q(G)\stackrel{\mathrm{def}}{=} \left(\sum_{j=1}^n
%\left\|e_j(G)\right\|_p^q\right)^{1/q}=\left(1+\sum_{j=2}^n
%\left\|e_j(G)\right\|_p^q\right)^{1/q}.
%$$
%For the sake of simplicity we shall also denote
%$M(G)=M_\infty^\infty(G)=\max_{1\le j\le n} \|e_j(G)\|_\infty$.

The following lemma becomes Lemma~\ref{lem:interpolated infty} when
$q=r=\infty$.

\begin{lemma}\label{lem:interpolated} Assume that $p,r> 1$ and $q>\frac{2p}{p-1}$. Let
$G=(V,E)$ be an $n$-vertex $d$-regular graph, and let
$e_1,\ldots,e_n$ be an orthonormal eigenbasis of the normalized
adjacency matrix $A(G)$. Then for every $\emptyset\neq S\subsetneq
V$ we have,
$$
\left|\frac{E_G(S,V\setminus S)}{\frac{d}{n}|S|(n-|S|)}-1\right|\le
M^{\frac{q}{p(q-2)}}\left(\sum_{i=2}^n|\lambda_i(G)|^p\right)^{\frac{1}{p}}\left(\frac{|S|^{\delta}(n-|S|)^{1-\delta}+
|S|^{1-\delta}(n-|S|)^{\delta}}{n}\right)^{\frac{1}{1-\delta}},
$$
where
\begin{equation}\label{eq:defs}
M\eqdef \left( \sum_{j=1}^n\|e_j\|_{L_r(V)}^q\right)^{1/q} \quad
\mathrm{and}\quad \delta\eqdef \frac{q(r-2)}{pr(q-2)+q(r-2)}.
\end{equation}
\end{lemma}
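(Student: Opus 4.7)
The plan is to rewrite the isoperimetric deviation as a spectral sum in the orthonormal eigenbasis $\{e_j\}$ and then combine Hölder's inequality with Riesz--Thorin interpolation. First, write $\mathbf{1}_S = \frac{|S|}{n}\mathbf{1}_V + f_S$, where $f_S$ is the mean-zero component (and hence orthogonal to $e_1 = \mathbf{1}_V$ in $L_2(V)$). Using $A(G)e_1 = e_1$, the antisymmetry $f_{V\setminus S} = -f_S$, and the identity $E_G(S,V\setminus S) = nd\,\langle \mathbf{1}_S, A(G)\mathbf{1}_{V\setminus S}\rangle_{L_2(V)}$, a short expansion yields
\[
\frac{E_G(S,V\setminus S)}{\tfrac{d}{n}|S|(n-|S|)} - 1 \;=\; -\frac{n^2}{|S|(n-|S|)}\langle f_S, A(G) f_S\rangle \;=\; -\frac{n^2}{|S|(n-|S|)}\sum_{j=2}^n \lambda_j(G)\,\alpha_j^2,
\]
where $\alpha_j \eqdef \langle f_S, e_j\rangle$ (and $\alpha_1 = 0$). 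This reduces the task to estimating $\bigl|\sum_{j\ge 2} \lambda_j\alpha_j^2\bigr|$.

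Next I would apply Hölder's inequality with conjugate exponents $p$ and $p' = p/(p-1)$ to get
\[
\Bigl|\sum_{j\ge 2} \lambda_j\alpha_j^2\Bigr|\;\le\; \Bigl(\sum_{j\ge 2}|\lambda_j|^p\Bigr)^{1/p}\bigl\|(\alpha_j)_{j\ge 2}\bigr\|_{\ell^{2p'}}^{\,2},
\]
so everything comes down to bounding $\|(\alpha_j)\|_{\ell^{2p'}}$. The key tool is Riesz--Thorin interpolation applied to the linear map $T: f \mapsto (\langle f, e_j\rangle)_{j\ge 2}$. By Parseval, $T: L_2(V)\to\ell^2$ is an isometry; on the other hand, the Hölder inequality $|\langle f, e_j\rangle|\le\|f\|_{L_{r'}(V)}\|e_j\|_{L_r(V)}$ combined with the definition of $M$ shows that $T: L_{r'}(V)\to\ell^q$ has operator norm at most $M$. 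Interpolating between these two endpoints with parameter $\theta$ chosen so that the target index equals $2p'$ yields $\|(\alpha_j)\|_{\ell^{2p'}}\le M^\theta\|f_S\|_{L_s(V)}$, where a brief computation gives $\theta = q/(p(q-2))$ (which lies in $(0,1)$ precisely because of the hypothesis $q > 2p/(p-1)$) and $1/s = (1-\theta)/2 + \theta/r'$.

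The remaining work is purely algebraic. Directly from the formulas for $\theta$ and $s$ one verifies that $1-\delta = s/2$, with $\delta$ as defined in the lemma. Then the explicit evaluation
\[
\|f_S\|_{L_s(V)}^s \;=\; \frac{|S|(n-|S|)\bigl(|S|^{s-1}+(n-|S|)^{s-1}\bigr)}{n^{s+1}},
\]
combined with the $\tfrac{n^2}{|S|(n-|S|)}$ prefactor and the factorization $|S|^\delta(n-|S|)^{1-\delta}+|S|^{1-\delta}(n-|S|)^\delta = (|S|(n-|S|))^\delta\bigl(|S|^{1-2\delta}+(n-|S|)^{1-2\delta}\bigr)$, reassembles into the stated size factor $\bigl(\frac{|S|^\delta(n-|S|)^{1-\delta}+|S|^{1-\delta}(n-|S|)^\delta}{n}\bigr)^{1/(1-\delta)}$. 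The main obstacle I anticipate is not any individual step---each is classical---but rather the bookkeeping: keeping the Hölder exponents $(p,q,r)$ and the interpolated parameters $(\theta,s,\delta)$ straight, and carefully tracking the power of $M$ that appears after squaring $\|(\alpha_j)\|_{\ell^{2p'}}$ so that the final exponent matches the $M^{q/(p(q-2))}$ stated in the lemma.
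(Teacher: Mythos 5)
Your proposal is correct and follows essentially the same route as the paper's proof. You interpolate the coefficient map $T:f\mapsto(\langle f,e_j\rangle)_j$ between $L_2\to\ell_2$ and $L_{r/(r-1)}\to\ell_q$ and then apply H\"older against the eigenvalue sequence; the paper does exactly this. The only cosmetic difference is the choice of test vector: you use the mean-zero part $f_S=\1_S-\tfrac{|S|}{n}\1_V$, while the paper uses $x=(n-|S|)\1_S-|S|\1_{V\setminus S}=n f_S$; the constant $n$ is then absorbed by the prefactor, so both bookkeepings give the identical size factor, as you verified.

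One remark on the point you flag at the end. Carrying out your computation honestly produces the exponent $M^{2\theta}=M^{2q/(p(q-2))}$, since H\"older requires $\|(\alpha_j)\|_{\ell^{2p'}}^2$ and each application of the interpolated operator bound contributes a factor $M^\theta$. This is not a bookkeeping error on your part: the displayed chain of inequalities inside the paper's own proof also yields $M^{\frac{2q}{p(q-2)}}$, and since for $r\geq 2$ every orthonormal vector has $\|e_j\|_{L_r(V)}\geq\|e_j\|_{L_2(V)}=1$ and hence $M\geq n^{1/q}>1$, the exponent $\tfrac{q}{p(q-2)}$ stated in the lemma is too small; it appears to be a typo for $\tfrac{2q}{p(q-2)}$. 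In the intended application (characters of an Abelian group with $r=q=\infty$) one has $M\leq 1$, so the discrepancy is immaterial there, but you should report the exponent your argument actually gives rather than try to massage it down to match the printed statement.
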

\begin{proof}
Consider the linear operator $T:\R^V\to \R^n$ given by
$$
T(x)\stackrel{\mathrm{def}}{=} \Big(\langle x,e_1\rangle,\langle
x,e_2\rangle,\ldots,\langle x,e_n\rangle\Big). $$ Since
$\{e_1,\ldots,e_n\}$ is an orthonormal basis of $L_2(V)$ we have
$\|T(x)\|_{\ell_2^n}=\|x\|_{L_2(V)}$ for all $x\in \R^V$. Moreover,
$$
\|T(x)\|_{\ell_q^n}=\left(\sum_{j=1}^n|\langle
x,e_j\rangle|^q\right)^{1/q}\le\left(
\sum_{j=1}^n\|e_j\|_{L_r(V)}^q\|x\|^q_{L_{\frac{r}{r-1}}(V)}\right)^{1/q}=M
\|x\|_{L_{\frac{r}{r-1}(V)}}.
$$
In other words, we have the operator norm bounds
$$
\|T\|_{L_2(V)\to \ell_2^n}=1\quad \mathrm{and}\quad
\|T\|_{L_{\frac{r}{r-1}}(V)\to \ell_q^n}\le M. $$

Recall that $\frac{2p}{p-1}\in (2,q)$, so we can define $\e\in
(0,1)$ by $\frac{p-1}{2p}=\frac{\e}{2}+\frac{1-\e}{q}$, i.e.,
$$
\e=1-\frac{q}{p(q-2)}. $$ If we then define $a>1$ via
$\frac{1}{a}=\frac{\e}{2}+\frac{(1-\e)(r-1)}{r}$, i.e.,
$$
a=\frac{2pr(q-2)}{pr(q-2)+q(r-2)}\stackrel{\eqref{eq:defs}}{=}2(1-\delta),
$$
then the Riesz-Thorin interpolation theorem
(see~\cite[Ch.~XII]{Zyg02}) asserts that for every $x\in \R^V$ we
have
\begin{equation}\label{eq:interpolated}
\|T(x)\|_{\ell_{\frac{2p}{p-1}}^n}\le
M^{1-\e}\|x\|_{L_a(V)}=M^{\frac{q}{p(q-2)}}\|x\|_{L_a(V)}.
\end{equation}

Fix $S\subseteq V$ and consider the function $x\in \R^V$ given by
$$
x\eqdef (n-|S|)\1_S-|S|\1_{V\setminus S}.
$$
Since $\langle x,e_1\rangle=\sum_{u\in V} x(u) =0$, the
bound~\eqref{eq:interpolated} becomes
\begin{equation*}\label{eq:plugged x}
\left(\sum_{j=2}^n \left|\langle
x,e_j\rangle\right|^{\frac{2p}{p-1}}\right)^{\frac{p-1}{2p}}\le
M^{\frac{q}{p(q-2)}}
\left(\frac{|S|(n-|S|)^a+(n-|S|)|S|^a}{n}\right)^{\frac{1}{a}}.
\end{equation*}
Hence,
\begin{multline*}
|\langle A(G)x,x\rangle|=\left|\sum_{j=2}^n \lambda_j(G)\langle
x,e_j\rangle^2\right|\le \left(\sum_{j=2}^n
|\lambda_j(G)|^p\right)^{\frac{1}{p}}\left(\sum_{j=2}^n
\left|\langle
x,e_j\rangle\right|^{\frac{2p}{p-1}}\right)^{\frac{p-1}{p}}\\
\le M^{\frac{2q}{p(q-2)}}\left(\sum_{j=2}^n
|\lambda_j(G)|^p\right)^{\frac{1}{p}}
\left(\frac{|S|(n-|S|)^a+(n-|S|)|S|^a}{n}\right)^{\frac{2}{a}}.
\end{multline*}
The required result now follows from the identity
\begin{equation*}
\langle A(G)x,x\rangle=|S|(n-|S|)-\frac{n}{d}E_G(S,V\setminus
S).\qedhere
\end{equation*}
\end{proof}

\section{Proof of Theorem~\ref{thm:our}}

Let $\Gamma$ be a group of cardinality $n$.
Recall that $R:\Gamma\to
GL(L_2(\Gamma))$ is the right regular representation, and given $g_1,\ldots,g_k\in \Gamma$, the matrix $A(g_1,\ldots,g_k)$ is defined as in~\eqref{eq:def A(g_i)}.

\begin{lemma}\label{lem:AR schatten}
There exists a universal constant $C\in (0,\infty)$ with the
following property. Fix $k,n\in \N$ and a group $\Gamma$ of
cardinality $n$. Let $g_1,\ldots,g_k\in \Gamma$ be chosen
independently and uniformly at random. Then with probability at
least $\frac12$, for every integer $p\ge 2$ we have
$$
\left\|\left(I-\frac{1}{n}J\right)A(g_1,\ldots,g_k)\right\|_{S_p}\le
Cn^{1/p}\sqrt{\frac{p}{k}}.
$$
\end{lemma}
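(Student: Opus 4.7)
The plan is to express $(I-\tfrac{1}{n}J)A(g_1,\ldots,g_k)$ as a sum of i.i.d.\ mean-zero $S_p$-valued random matrices, apply Theorem~\ref{thm:moment} in the Schatten space $S_p$ to get a $p$th-moment bound, turn that into a tail bound via Markov's inequality, and union-bound over integer $p\ge 2$. Concretely, set $X_i\eqdef \tfrac{1}{2k}(I-\tfrac{1}{n}J)(R(g_i)+R(g_i^{-1}))$, so that $(I-\tfrac{1}{n}J)A(g_1,\ldots,g_k)=\sum_{i=1}^k X_i$. Because $R$ is the right regular representation, $\E[R(g_i)]=\E[R(g_i^{-1})]=\tfrac{1}{n}\sum_{g\in\Gamma}R(g)=\tfrac{1}{n}J$, and the identity $J^2=nJ$ gives $(I-\tfrac{1}{n}J)\cdot \tfrac{1}{n}J=0$. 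Hence $\E[X_i]=0$, so $X_1,\ldots,X_k$ is an i.i.d.\ (and in particular martingale-difference) sequence in $S_p$.

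Next I would establish a deterministic bound on $\|X_i\|_{S_p}$. Each $R(g)$ is a permutation matrix, hence unitary, so all of its singular values equal $1$ and $\|R(g)\|_{S_p}=n^{1/p}$. The matrix $I-\tfrac{1}{n}J$ is the orthogonal projection onto the orthogonal complement of $\1_\Gamma$ in $L_2(\Gamma)$, so its operator norm equals $1$. By the ideal property $\|AB\|_{S_p}\le \|A\|_{\mathrm{op}}\|B\|_{S_p}$, this gives $\|X_i\|_{S_p}\le \tfrac{n^{1/p}}{k}$. For each integer $p\ge 2$, $S_p$ satisfies $\rho_{S_p}(\tau)\le \tfrac{p-1}{2}\tau^2$ (as recalled in the paper), so Theorem~\ref{thm:moment} applies with $s=\tfrac{p-1}{2}$ and $q=p$; the assumption~\eqref{eq:geometric condition} is automatic for a martingale-difference sequence, as shown in~\eqref{eq:to take}. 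This yields
$$
\left(\E\left[\left\|\sum_{i=1}^k X_i\right\|_{S_p}^p\right]\right)^{1/p}\le 8\sqrt{\tfrac{p-1}{2}+p}\cdot \sqrt{\,k\cdot \tfrac{n^{2/p}}{k^2}\,}\le C_1\sqrt{p}\,\frac{n^{1/p}}{\sqrt{k}},
$$
for a universal constant $C_1$ (one can take $C_1=4\sqrt{6}$).

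Finally, Markov's inequality gives, for each integer $p\ge 2$,
$$
\Pr\left[\left\|\sum_{i=1}^k X_i\right\|_{S_p}\ge 3C_1\sqrt{p}\,\frac{n^{1/p}}{\sqrt{k}}\right]\le 3^{-p}.
$$
Since $\sum_{p=2}^{\infty}3^{-p}=\tfrac{1}{6}<\tfrac{1}{2}$, a union bound over the countably many integers $p\ge 2$ shows that with probability at least $\tfrac{1}{2}$ the asserted inequality holds simultaneously for every integer $p\ge 2$, with $C=3C_1$. The only nontrivial ingredients are the two structural facts already recorded in the paper—namely $\rho_{S_p}(\tau)\lesssim p\tau^2$ and $\E[R(g)]=J/n$—while the only step one might call an ``obstacle'' is ensuring that the $p$-dependent constants behave well enough for a single union bound to close; this is handled cleanly by the exponential-in-$p$ Markov tail competing against the $\sqrt{p}$ target scale.
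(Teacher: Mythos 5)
Your argument is correct and follows essentially the same route as the paper: identify the centred i.i.d.\ summands, check the martingale-difference hypothesis, bound each term's $S_p$ norm pointwise (you get $n^{1/p}/k$ via the ideal property, the paper gets $(n-1)^{1/p}/k$ via computing $MM^*$; both work), feed this into Theorem~\ref{thm:moment} with $q=p$ and $s\lesssim p$, and then control all integer $p\ge 2$ at once. The only cosmetic difference is that you run a per-$p$ Markov bound followed by a union bound, whereas the paper applies Markov once to the aggregate random variable $\sum_{p\ge 2}\bigl(\cdots\bigr)^p$; these are interchangeable.
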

\begin{proof}
Note that we
have for all $i\in \{1,\ldots,k\}$,
\begin{equation}\label{eq:exp AR}
\E\left[\left(I-\frac{J}{n}\right)\frac{R(g_i)+R(g_i^{-1})}{2}\right]=0.
\end{equation}
Moreover, because $MM^*=I-\frac{1}{n}J$, where
$M=\left(I-\frac{1}{n}J\right)R(g_i)$, for $p\ge 2$ we have
$\left\|\left(I-\frac{1}{n}J\right)R(g_i)\right\|_{S_p}=(n-1)^{1/p}$.
We therefore have the (point-wise) bound
\begin{equation}\label{eq:pointwise bound}
\left\|\left(I-\frac{1}{n}J\right)\frac{R(g_i)+R(g_i^{-1})}{2}\right\|_{S_p}
\le (n-1)^{1/p}.
\end{equation}
Theorem~\ref{thm:moment} now implies that for some universal
constant $c\in (0,\infty)$,
\begin{equation*}\label{eq:explicit moment bound}
\E\left[\left\|\left(I-\frac{1}{n}J\right)A(g_1,\ldots,g_k)\right\|_{S_p}^p\right]\le
\left(\frac{cp}{k}\right)^{p/2}n.
\end{equation*}
Hence,
$$
\E\left[\sum_{p=2}^\infty
\left(\frac{\left\|\left(I-\frac{1}{n}J\right)A(g_1,\ldots,g_k)\right\|_{S_p}}{2n^{1/p}\sqrt{cp/k}}\right)^p
\right]\le \sum_{p=2}^\infty \frac{1}{2^p}=\frac12.
$$
It follows from Markov's inequality that with probability at least
$\frac12$ we have
$$
\max_{p\in \N\cap [2,\infty)}
\left(\frac{\left\|\left(I-\frac{1}{n}J\right)A(g_1,\ldots,g_k)\right\|_{S_p}}{2n^{1/p}\sqrt{cp/k}}\right)^p\le
\sum_{p=2}^\infty
\left(\frac{\left\|\left(I-\frac{1}{n}J\right)A(g_1,\ldots,g_k)\right\|_{S_p}}{2n^{1/p}\sqrt{cp/k}}\right)^p\le
1.\qedhere
$$
\end{proof}

\begin{proof}[Proof of Theorem~\ref{thm:our}]
 $\Gamma$ is now Abelian, and therefore for every $g_1,\ldots,g_k\in \Gamma$,   the characters of $\Gamma$ are an orthonormal eigenbasis of $A(g_1,\ldots,g_k)$, consisting of functions whose absolute value is point-wise bounded by $1$. By Lemma~\ref{lem:interpolated} and Lemma~\ref{lem:AR schatten}, with probability at least $\frac12$ over i.i.d. uniform choice of $g_1,\ldots,g_k\in \Gamma$, if we let $G$ be the Alon-Roichman graph whose adjacency matrix is $A(g_1,\ldots,g_k)$, then every $S\subseteq \Gamma$ with $2\le |S|\le \frac{n}{2}$ satisfies
\begin{multline}\label{optimize p}
\left|\frac{E_G(S,V\setminus S)}{\frac{2k}{n}|S|(n-|S|)}-1\right|\lesssim \min_{p\in \N\cap [2,\infty)} \left( n^{\frac{1}{p}}\sqrt{\frac{p}{k}}\left(\frac{|S|^{\frac{1}{p+1}}(n-|S|)^{\frac{p}{p+1}}+
|S|^{\frac{p}{p+1}}(n-|S|)^{\frac{1}{p+1}}}{n}\right)^{\frac{p+1}{p}}\right)\\
\lesssim \min_{p\in \N\cap [2,\infty)}\left(|S|^{\frac{1}{p}}\sqrt{\frac{p}{k}}\right) \lesssim \sqrt{\frac{\log |S|}{k}},
\end{multline}
where int he last step of~\eqref{optimize p} we choose $p=2\lceil \log |S|\rceil$.
\end{proof}

\section{Proof of Theorem~\ref{thm:small k}}

If $k\le 1/\e^2$ then Theorem~\ref{thm:small k} is vacuous for $c>0$
small enough. Assuming $k\ge 1/\e^2$, denote $p=2\e^2 k\ge 2$.
Arguing analogously to~\eqref{optimize p}, we see that there exists
a universal constant $K\in (0,\infty)$ such that with probability at
least $\frac12$ over i.i.d. uniform choice of $g_1,\ldots,g_k\in
\Gamma$, if we let $G$ be the Alon-Roichman graph whose adjacency
matrix is $A(g_1,\ldots,g_k)$, then for all $S\subseteq \Gamma$ with
$2\le |S|\le \frac{n}{2}$,
$$
\left|\frac{E_G(S,V\setminus S)}{\frac{2k}{n}|S|(n-|S|)}-1\right|\le
K|S|^{\frac{1}{p}}\sqrt{\frac{p}{k}}= \e
K\sqrt{2}|S|^{\frac{1}{2\e^2k}}\le 2K\e,
$$
provided $|S|\le 2^{\e^2k}$.\qed

%\cite{Kon01,DJP01,Mar57,SvN48,Wey49,AR94,HLW06,CM08,LR04,LS04,AM85,WX08,Kan09,NR09,ABS10,Bab91,Pak99,Tan84,GK69}

\bibliographystyle{abbrv}
\bibliography{martingale}

\def\cprime{$'$}
\begin{thebibliography}{10}

\bibitem{AW02}
R.~Ahlswede and A.~Winter.
\newblock Strong converse for identification via quantum channels.
\newblock {\em IEEE Trans. Inform. Theory}, 48(3):569--579, 2002.

\bibitem{AM85}
N.~Alon and V.~D. Milman.
\newblock {$\lambda_1,$} isoperimetric inequalities for graphs, and
  superconcentrators.
\newblock {\em J. Combin. Theory Ser. B}, 38(1):73--88, 1985.

\bibitem{AR94}
N.~Alon and Y.~Roichman.
\newblock Random {C}ayley graphs and expanders.
\newblock {\em Random Structures Algorithms}, 5(2):271--284, 1994.

\bibitem{AS00}
N.~Alon and J.~H. Spencer.
\newblock {\em The probabilistic method}.
\newblock Wiley-Interscience Series in Discrete Mathematics and Optimization.
  Wiley-Interscience [John Wiley \& Sons], New York, second edition, 2000.
\newblock With an appendix on the life and work of Paul Erd\H os.

\bibitem{ABS10}
S.~Arora, B.~Barak, and D.~Steurer.
\newblock Subexponential algorithms for {U}nique {G}ames and related problems.
\newblock To appear in proceedings of 51th Annual IEEE Symposium on Foundations
  of Computer Science, 2010.

\bibitem{BCN94}
K.~Ball, E.~A. Carlen, and E.~H. Lieb.
\newblock Sharp uniform convexity and smoothness inequalities for trace norms.
\newblock {\em Invent. Math.}, 115(3):463--482, 1994.

\bibitem{BG69}
Bu{\u\i}-Min-{\v{C}}i and V.~I. Gurari{\u\i}.
\newblock Certain characteristics of normed spaces and their application to the
  generalization of {P}arseval's equality to {B}anach spaces.
\newblock {\em Teor. Funkci\u\i\ Funkcional. Anal. i Prilo\v zen. Vyp.},
  8:74--91, 1969.

\bibitem{CM08}
D.~Christofides and K.~Markstr{\"o}m.
\newblock Expansion properties of random {C}ayley graphs and vertex transitive
  graphs via matrix martingales.
\newblock {\em Random Structures Algorithms}, 32(1):88--100, 2008.

\bibitem{Fig76}
T.~Figiel.
\newblock On the moduli of convexity and smoothness.
\newblock {\em Studia Math.}, 56(2):121--155, 1976.

\bibitem{Han56}
O.~Hanner.
\newblock On the uniform convexity of {$L\sp p$} and {$l\sp p$}.
\newblock {\em Ark. Mat.}, 3:239--244, 1956.

\bibitem{LR04}
Z.~Landau and A.~Russell.
\newblock Random {C}ayley graphs are expanders: a simple proof of the
  {A}lon-{R}oichman theorem.
\newblock {\em Electron. J. Combin.}, 11(1):Research Paper 62, 6 pp.
  (electronic), 2004.

\bibitem{Lin63}
J.~Lindenstrauss.
\newblock On the modulus of smoothness and divergent series in {B}anach spaces.
\newblock {\em Michigan Math. J.}, 10:241--252, 1963.

\bibitem{LT79}
J.~Lindenstrauss and L.~Tzafriri.
\newblock {\em Classical {B}anach spaces. {II}}, volume~97 of {\em Ergebnisse
  der Mathematik und ihrer Grenzgebiete [Results in Mathematics and Related
  Areas]}.
\newblock Springer-Verlag, Berlin, 1979.
\newblock Function spaces.

\bibitem{LS04}
P.-S. Loh and L.~J. Schulman.
\newblock Improved expansion of random {C}ayley graphs.
\newblock {\em Discrete Math. Theor. Comput. Sci.}, 6(2):523--528 (electronic),
  2004.

\bibitem{Lust86}
F.~Lust-Piquard.
\newblock In\'egalit\'es de {K}hintchine dans {$C\sb p\;(1<p<\infty)$}.
\newblock {\em C. R. Acad. Sci. Paris S\'er. I Math.}, 303(7):289--292, 1986.

\bibitem{L-PP91}
F.~Lust-Piquard and G.~Pisier.
\newblock Noncommutative {K}hintchine and {P}aley inequalities.
\newblock {\em Ark. Mat.}, 29(2):241--260, 1991.

\bibitem{Pak99}
I.~Pak.
\newblock Random {C}ayley graphs with {$O(\log\vert G\vert )$} generators are
  expanders.
\newblock In {\em Algorithms---{ESA} '99 ({P}rague)}, volume 1643 of {\em
  Lecture Notes in Comput. Sci.}, pages 521--526. Springer, Berlin, 1999.

\bibitem{Pis75}
G.~Pisier.
\newblock Martingales with values in uniformly convex spaces.
\newblock {\em Israel J. Math.}, 20(3-4):326--350, 1975.

\bibitem{Tan84}
R.~M. Tanner.
\newblock Explicit concentrators from generalized {$N$}-gons.
\newblock {\em SIAM J. Algebraic Discrete Methods}, 5(3):287--293, 1984.

\bibitem{TJ74}
N.~Tomczak-Jaegermann.
\newblock The moduli of smoothness and convexity and the {R}ademacher averages
  of trace classes {$S\sb{p}(1\leq p<\infty )$}.
\newblock {\em Studia Math.}, 50:163--182, 1974.

\bibitem{Tro10}
J.~Tropp.
\newblock User-friendly tail bounds for matrix martingales.
\newblock Preprint available at \url{http://arxiv.org/abs/1004.4389}, 2010.

\bibitem{WX08}
A.~Wigderson and D.~Xiao.
\newblock Derandomizing the {A}hlswede-{W}inter matrix-valued {C}hernoff bound
  using pessimistic estimators, and applications.
\newblock {\em Theory Comput.}, 4:53--76, 2008.

\bibitem{Zyg02}
A.~Zygmund.
\newblock {\em Trigonometric series. {V}ol. {I}, {II}}.
\newblock Cambridge Mathematical Library. Cambridge University Press,
  Cambridge, third edition, 2002.
\newblock With a foreword by Robert A. Fefferman.

\end{thebibliography}

\end{document}

\begin{eqnarray*}
\left(\frac{\|f+\tau g\|_{L_q(X)}+\|f-\tau
g\|_{L_q(X)}}{2}\right)^q&\le& \frac{\|f+\tau
g\|_{L_q(X)}^q+\|f-\tau g\|^q_{L_q(X)}}{2}\\&=&\int_\Omega
\frac{\|f+\tau g\|^q+\|f-\tau g\|^q}{2}d\mu\\&\le&
\int_\Omega\left(\|f\|^2+8(s+q)\cdot
\tau^2\|g\|^2\right)^{q/2}d\mu\\
&=& \left\|\|f\|^2+8(s+q)\cdot \tau^2\|g\|^2\right\|_{q/2}^{q/2}\\
&\le&
\left(\left\|\|f\|^2\right\|_{q/2}+8(s+q)\tau^2\left\|\|g\|^2\right\|_{q/2}\right)^{q/2}\\
&=&
\left(\|f\|_{L_q(X)}^2+8(s+q)\tau^2\|g\|_{L_q(X)}^2\right)^{q/2}\\
&=& \left(1+8(s+q)\tau^2\right)^{q/2}.
\end{eqnarray*}
---------------------------------------------------

This gives the following corollary.

\begin{corollary}\label{coro:maringale} Assume that $\rho_X(\tau)\le s\tau^2$ for all
$\tau>0$ and fix $q\ge 2$. Let $\{Z_n\}_{n=1}^\infty$ be a
$q$-absolutely integrable $X$-valued martingale difference sequence.
Then for all $n\in \N$,
$$
\left(\E\|Z_1+\cdots+Z_n\|^q\right)^{1/q}\le 16\sqrt{s+q}\cdot
\sqrt{\sum_{j=1}^n\left(\E\|Z_j\|^q\right)^{2/q}}.
$$
In particular, if we also have the $L_\infty$ bound $\|Z_n\|\le a_n$
(pointwise) then for every $u>2\sqrt{s(a_1^2+\cdots+a_n^2)}$, taking
$q=\Theta\left(\frac{u^2}{a_1^2+\cdots+a_n^2}\right)$ and applying
Markov's inequality we get that
$$
\Pr\left[\|Z_1+\cdots+Z_n\|\ge
u\right]\le\exp\left(-\frac{cu^2}{a_1^2+\cdots+a_n^2}\right),
$$
where $c$ is a universal constant. When $X=S_p$, $p\ge 2$, the lower
bound on $u$ becomes $u>2\sqrt{p(a_1^2+\cdots+a_n^2)}$. If we
consider the space $M_d(\R)$ of all $d\times d$ matrices, equipped
with the operator norm, then this space embeds with distortion $4$
in $S_p$ for $p=\log d$. Thus in the case of the operator norm the
above sub-Gaussian deviation inequality is valid when
$u>2\sqrt{(a_1^2+\cdots+a_n^2)\log d}$.
\end{corollary}

\begin{remark}\label{remark:ravi}
Another example worth mentioning is when $p$ is an even integer and
$Z_n\in L_p(\Omega,\mu)$ satisfy the point-wise condition $\E
\left[Z_{n+1}S_n^{p-1}\right]\le 0$ for all $n\in \N$. In this case
$$
\E \left[\|S_n-Z_{n+1}\|_p^p\right] =\E
\left[\int_\Omega(S_n-Z_{n+1})^pd\mu\right]\ge \E \left[\int_\Omega
\left(S_n^p+pZ_{n+1}S_n^{p-1}\right)d\mu\right]\ge
\E\left[\|S_n\|_p^p\right].
$$
Therefore the assumption of Theorem~\ref{thm:moment} hold in this
case as well. A similar argument should apply to the non-commutative
case of the Schatten class $S_p$ (I didn't check the details yet).
\end{remark}

--------------------------------------

\section{Appendix: A crash course on Schatten norms}

Let $A\in M_n(\R)$ be an $n\times n$ matrix with real entries. We
shall use the standard notation $|A|=\sqrt{AA^*}$. The eigenvalues
of $|A|$, commonly known as the singular values of $A$, are denoted
by $s_1(A)\ge s_2(A)\ge\cdots\ge s_n(A)$. For $1\le p<\infty$ denote
$\|A\|_p=\left(\sum_{i=1}^ns_i(A)^p\right)^{1/p}$. The functional
$\|\cdot\|_p$ was comprehensively studied by Schatten and von
Neumann in \cite{schatten}, where it was shown to be a norm on
$M_n(\R)$. While this basic operator theoretical fact is well
understood (see for example the survey articles \cite{konig, djp}
and the references therein), its known proofs are somewhat indirect:
The proof in \cite{schatten} uses trace duality, and more "modern"
proofs rely on the so-called Weyl eigenvalue inequalities
\cite{weyl}. The original motivation for this note was to better
understand the geometric reason for the validity of the Schatten von
Neumann theorem. We present here a direct proof of this fact, which
we believe is simpler than the proofs in the existing literature.
The proof is based on a variational principle which seems rather
flexible, and may be of independent interest. This principle is
shown to imply that various inequalities which are valid for vectors
in $\R^n$ (of which the triangle inequality for the $\ell_p$ norm is
a particular example), naturally extend to the appropriate matrix
inequalities.

\begin{comment}
In what follows, given a symmetric matrix $A\in M_n(\R)$ we denote
by $\{\lambda_i(A)\}_{i=1}^n$ the decreasing rearrangement of the
eigenvalues of $A$ (with multiplicity).
\end{comment}

\begin{comment}
In what follows, we fix a Euclidean structure on $\R^n$. A basis
$w_1,\ldots,w_n$ of $\R^n$ is called {\em normalized} if it consists
of unit vectors.
\end{comment}

Let $K\subseteq \R^n$ be a convex set. We say that a function
$f:K\to \R$ has property $(*)$ if for every $1\le i<j\le n$ and
every $(x_1,\ldots,x_n)\in K$, the function
$$
t\mapsto f(x_1,\ldots,x_{i-1},
x_i+t,x_{i+1},\ldots,x_{j-1},x_{j}-t,x_{j+1},\ldots,x_n)
$$
is a convex function of $t$ (on its domain of definition). Clearly,
if $f$ is convex then it has property $(*)$. If $f$ is twice
continuously differentiable then property $(*)$ is equivalent to the
point-wise condition $f_{x_ix_i}+f_{x_jx_j}-2f_{x_ix_j}\ge 0$  for
every $1\le i<j\le n$.

Given a function $f:[0,\infty)^n\to \R$, we can use it to define a
function $\widetilde{f}: M_n(\R)\to \R$ by
$\widetilde{f}(A)=f(s_1(A),\ldots,s_n(A))$.

The following proposition shows that various vector inequalities
automatically ``transfer" to the analogous matrix inequalities:

\begin{proposition}\label{prop:transfer} Let $F:[0,\infty)^n\to \R$, $G_1,\ldots,
G_k:R^n\to \R$ and $H:\R^k\to \R$ be continuous functions such that
$G_1,\ldots,G_k$ satisfy property $(*)$ and are invariant under
changes of sign and permutation of the coordinates, $H$ is
increasing in each coordinate and for every $v_1,\ldots,v_k\in
\R^n$,
$$
F\big(|v_1+\cdots+v_k|\big)\le H\big(G_1(v_1),\cdots,G_k(v_k)\big).
$$
Then for every $A_1,\ldots,A_k\in M_n(\R)$,
$$
\widetilde{F}\big(A_1+\cdots+A_k\big)\le
H\big(\widetilde{G}_1(A_1),\ldots,\widetilde{G}_k(A_k)\big).
$$
\end{proposition}

An immediate corollary of Proposition \ref{prop:transfer} is that
whenever ${\mathcal N}(\cdot)$ is a norm on $\R^n$ which is
invariant under permutations and changes of sign (in standard Banach
space terminology,  we require that ${\mathcal N}$ is symmetric and
$1$-unconditional) then ${\mathcal{\widetilde{N}}}$ is a norm on
$M_n(\R)$. The list of matrix inequalities that follow from
Proposition \ref{prop:transfer} is, naturally, quite long. To
illustrate but one, for every $A,B\in M_n(\R)$:
$$
\mathrm{trace}\big(|A+B|\log|A+B|\big)\le \mathrm{trace}\big[
|A|\log(2 |A|)\big]+\mathrm{trace}\big[|B|\log (2|B|) \big].
$$
This inequality easily follows from the appropriate numerical
inequality, and Proposition \ref{prop:transfer} (the constant $2$
above is optimal).

\smallskip

Proposition \ref{prop:transfer} is a simple consequence of the
following inequality:

\begin{proposition}\label{prop:general} Let $f: \R^n\to \R$ be a function which has property $(*)$. Then
for every matrix $A\in M_n(\R)$ and every two orthonormal bases
$\{u_1,\ldots,u_n\}, \{v_1,\ldots,v_n\}\subseteq \R^n$, there is a
permutation $\pi\in S_n$ and signs $\e_1,\ldots,\e_n\in \{-1,1\}$
such that:
$$
f\big(\langle Au_1,v_1\rangle,\ldots, \langle
Au_n,v_n\rangle\big)\le f\big(\e_1s_{\pi(1)}(A),\ldots,
\e_ns_{\pi(n)}(A)\big).
$$
\end{proposition}
We remark that Proposition \ref{prop:general} was proved in
\cite{marcus}, in the special case $f(x)=|c_1x_1+\cdots+c_nx_n|$,
$c_1,\ldots,c_n\ge 0$.

Before proving Proposition \ref{prop:general}, we show how it
implies Proposition \ref{prop:transfer}:

\begin{proof}[Proof of Proposition \ref{prop:transfer}] Take
$A_1,\ldots,A_k\in M_n(\R)$. There is an orthogonal matrix $W\in
M_n(\R)$ such that $A_1+\cdots+A_k=W|A_1+\cdots+A_k|$. Let
$v_1,\ldots,v_n$ be an orthonormal eigenbasis of $|A_1+\cdots+A_k|$.
Then by Proposition \ref{prop:general} there are permutations
$\pi_1,\ldots,\pi_k\in S_n$ and signs
$\left\{\e_1^i,\ldots,\e_n^i\in \{-1,1\}\right\}_{i=1}^k$ such that:
\begin{eqnarray*}
&&\!\!\!\!\!\!\!\!\!\!\!\!\!\!\!\widetilde{F}\big(|A_1+\cdots+A_k|\big)=F\big(
s_1(A_1+\cdots+A_k),\ldots,s_n(A_1+\cdots+A_k)\big)\\
&=&F\big(\left\langle
|A_1+\cdots+A_k|v_1,v_1\right\rangle,\ldots,\left\langle
|A_1+\cdots+A_k|v_n,v_n\right\rangle\big)\\
&=&F\big(\left\langle
(A_1+\cdots+A_k)v_1,Wv_1\right\rangle,\ldots,\left\langle
(A_1+\cdots+A_k)v_n,Wv_n\right\rangle\big)\\
&\le&H\big(G_1(\left\langle
A_1v_1,Wv_1\right\rangle,\ldots,\left\langle
A_1v_n,Wv_n\right\rangle),\ldots,G_k(\left\langle
A_kv_1,Wv_1\right\rangle,\ldots,\left\langle
A_kv_1,Wv_n\right\rangle)\big)\\
&\le&
H\Big(G_1\big(\e_1^1s_{\pi_1(1)}(A_1),\ldots,\e_n^1s_{\pi_1(n)}(A_1)\big),\ldots,
G_k\big(\e_1^ks_{\pi_k(1)}(A_k),\ldots,\e_n^ks_{\pi_k(n)}(A_k)\big)\Big)\\
&=& H\big(\widetilde{G}_1(A_1),\ldots,\widetilde{G}_k(A_k)\big).
\end{eqnarray*}
\end{proof}

\begin{proof}[Proof of Proposition \ref{prop:general}]
Assume first that $f$ is twice continuously differentiable and that
for every distinct $i,j\in \{1,\ldots,n\}$ it satisfies the strict
point-wise inequality $f_{x_ix_i}+f_{x_jx_j}-2f_{x_ix_j}>0$. By
compactness there are orthonormal bases
$\{u_1,\ldots,u_n\},\{v_1,\ldots,v_n\}$ at which the supremum of
$f\big(\langle A\alpha_1,\beta_1\rangle,\ldots, \langle
A\alpha_n,\beta_n\rangle\big)$ over all possible choices of pairs of
orthonormal bases $\{\alpha_1,\ldots,\alpha_n\}$ and
$\{\beta_1,\ldots,\beta_n\}$ is attained. By rotating the
$\{u_1,u_2\}$ plane, and rotating the $\{v_1,v_2\}$ plane in either
the same direction, or the opposite direction, we consider the
functions $g_{\pm}:\R\to \R$ given by:
$$
g_{\pm}(t)=f\left(\left\langle
A\frac{u_1+tu_2}{\sqrt{1+t^2}},\frac{v_1\pm
tv_2}{\sqrt{1+t^2}}\right\rangle, \left\langle
A\frac{-tu_1+u_2}{\sqrt{1+t^2}},\frac{\mp
tv_1+v_2}{\sqrt{1+t^2}}\right\rangle, \left\langle
Au_3,v_3\right\rangle,\ldots ,\left\langle
Au_n,v_n\right\rangle\right).
$$

By our choice of $u_1,\ldots,u_n,v_1,\ldots v_n$ the functions
$g_{\pm}$ attain their maximum at 0, so that $g_{\pm}'(0)=0$ and
$g_{\pm}''(0) \le 0$. Direct differentiation gives:
\begin{eqnarray}\label{eq:first}
0=g_{\pm}'(0)=\big(\langle Au_2,v_1\rangle\pm\langle
Au_1,v_2\rangle\big)(f_{x_1}-f_{x_2}).
\end{eqnarray}
and
\begin{multline}\label{eq:second}
0\ge g_{\pm}''(0)
\\ =\big(\langle Au_2,v_1\rangle\pm\langle
Au_1,v_2\rangle\big)^2\left(f_{x_1x_1}+f_{x_2x_2}-2f_{x_1x_2}
\right)+2\big(\langle Au_1,v_1\rangle\mp\langle
Au_2,v_2\rangle\big)\left(f_{x_2}-f_{x_1} \right),
\end{multline}
 where all derivatives are taken at the point $\big(\langle Au_1,v_1\rangle,\ldots,\langle Au_n,v_n\rangle\big)\in
 \R^n$. Multiplying inequality (\ref{eq:second}) by $\big(\langle Au_2,v_1\rangle\pm\langle
Au_1,v_2\rangle\big)^2$, and using (\ref{eq:first}), we deduce that:
$$
0\ge \big(\langle Au_2,v_1\rangle\pm\langle
Au_1,v_2\rangle\big)^4\left(f_{x_1x_1}+f_{x_2x_2}-2f_{x_1x_2}
\right),
$$
which implies that $\langle Au_2,v_1\rangle\pm\langle
Au_1,v_2\rangle=0$, by our assumption on $f$. Hence $\langle
Au_1,v_2\rangle=\langle Au_2,v_1\rangle=0$. Of course, there
 is nothing special about the choice of the indices $\{1,2\}$, and it
 follows that for every $i,j\in \{1,\ldots, n\}$, $i\neq j$, $\langle
 Au_i,v_j\rangle=\langle
 u_i,A^*v_j\rangle=0$. This implies that there are scalars
 $\theta_1,\ldots,\theta_n,\mu_1,\ldots,\mu_n\in \R$ such that
 $Au_i=\theta_i v_i$ and $A^*v_i=\mu_iu_i$. Note that since all the bases are normalized, $\theta_i=\langle
 Au_i,v_i\rangle=\langle u_i,A^*v_i\rangle=\mu_i$. Now,
 $AA^*v_i=\theta_i^2v_i$, so that $\theta_1^2, \ldots, \theta_n^2$
 are the eigenvalues of $|A|^2$. This implies that there is a
 permutation $\pi\in S_n$ and signs $\e_1,\ldots,\e_n\in \{-1,1\}$
 such that, $\langle
 Au_i,v_i\rangle=\theta_i=\e_is_{\pi(i)}(A)$.

We pass to the general case via the obvious approximation argument.
Since all the ``action" occurs in the compact set $[-\|A\|,\|A\|]^n$
(where $\|A\|$ is the operator norm of $A$), we may uniformly
approximate $f$ by $C^{\infty}$ functions satisfying property $(*)$
(for example, by considering evolutes of $f$ along the heat
semigroup). We may therefore assume that $f$ is infinitely
differentiable. For every $\epsilon>0$ consider the function
$h(x)=f(x)+\epsilon\sum_{i=1}^n x_i^2$. Since $f$ has property
$(*)$, we have that for every $i,j\in\{1,\ldots,n\}$, $i\neq j$:
$$
h_{x_ix_i}+h_{x_jx_j}-2h_{x_ix_j}=f_{x_ix_i}+f_{x_jx_j}-2f_{x_ix_j}+4\epsilon\ge
4\epsilon>0.
$$
Hence we may apply the above reasoning to the function $h$, and
deduce the required inequality for $f$ by letting $\epsilon$ tend to
zero.
\end{proof}

\medskip

When $A$ is a symmetric matrix, the same proof as above yields the
following inequality, which was obtained in \cite{marcus} for the
case of convex functions (via a different proof). Given a symmetric
matrix $A$, its eigenvalues (with multiplicity) are denoted
$\lambda_1(A)\ge\lambda_2(A)\ge\ldots\ge \lambda_n(A)$.

\begin{proposition}\label{prop:symmetric} Let $f:\R^n\to \R$ be a function which has
property $(*)$. Then for every symmetric matrix $A\in M_n(\R)$ and
every orthonormal basis $\{u_1,\ldots,u_n\}\subseteq \R^n$ there is
a permutation $\pi\in S_n$ such that:
$$
f\big(\langle Au_1,u_1\rangle,\ldots, \langle
Au_n,u_n\rangle\big)\le f\big(\lambda_{\pi(1)}(A),\ldots,
\lambda_{\pi(n)}(A)\big).
$$

\end{proposition}

In the proof of Proposition \ref{prop:symmetric}, the only change is
that we only have one function $g$, obtained by rotating each two
dimensional subspace spanned by two members of a maximizing
orthonormal basis.

\medskip

It is somewhat amusing to mention here that the proofs of
Proposition \ref{prop:general} and Proposition \ref{prop:symmetric}
do not use the fact that the appropriate matrices are
diagonalizable. In fact, the particular case
$f(x)=x_1^2+\ldots+x_n^2$ yields a surprisingly simple proof of the
fact that every symmetric matrix is diagonalizable (the maximizing
orthonormal basis, which exists due to compactness, is shown to be
an eigenbasis). The author has actually successfully given this
proof as an exercise in an analysis course. Moreover, given a
symmetric matrix $A\in M_n(\R)$, one can use the above approach to
evaluate the minimum of $\left\langle
Au_1,u_1\right\rangle^2+\ldots+\left\langle Au_n,u_n\right\rangle^2$
over all possible choices of orthonormal bases
$\{u_1,\ldots,u_n\}\subseteq \R^n$. Via an analogous reasoning, one
obtains that a minimizing orthonormal basis must satisfy $\langle
Au_i,u_i\rangle=\langle Au_j,u_j\rangle$ for all $1\le i\le j\le n$.
In particular it follows that for any symmetric matrix $A$ there is
an orthonormal basis of $\R^n$ with respect to which $A$ has a
constant diagonal (in which case its value must be
$\frac{1}{n}\mathrm{trace}(A)$). This fact has the following
geometric interpretation. Let $\mathcal{E}\subseteq \R^n$ be an
ellipsoid with principal axes $\ell_1,\ldots, \ell_n$. Then there
exists an orthonormal basis $u_1,\ldots,u_n\in \R^n$ and a radius
$r>0$ such that $u_1,\ldots,u_n\in r\cdot\partial \mathcal{E}$.
Moreover, $r$ is uniquely determined, and is given by:
$$
r=\sqrt{\frac{\ell^2_1+\ldots+\ell^2_n}{n}}.
$$
One can prove this fact via a geometric argument, but the above
proof seems particularly simple.

\begin{comment}
We are now in position to deduce that $\|\cdot\|_p$ is a norm on
$M_n(\R)$ for $p\ge 1$. We will, in fact, prove a more general
result. Let $\N:\R^n\to [0,\infty)$ be a norm on $\R^n$. Assume that
$\N(\cdot)$ is symmetric and $1$-unconditional, i.e. for every
permutation $\pi\in S_n$, every choice of signs $\e_1,\ldots\e_n\in
\{-1,1\}$, and every $x\in \R^n$,
$\N(x)=\N(\e_1x_{\pi(1)},\ldots,\e_nx_{\pi(n)})$.

$\N$ extends to $M_n(\R)$ by defining for $A\in M_n(\R)$,
$\N(A)=\N\big(s_1(A),\ldots,s_n(A)\big)$. We claim that with this
definition, $\N(\cdot)$ becomes a norm on $M_n(\R)$. Indeed, take
$A,B\in M_n(\R)$. There is an orthogonal matrix $W\in M_n(R)$ such
that $A+B=W|A+B|$. Let $v_1,\ldots,v_n$ be an eigenbasis of $|A+B|$.
Then by Proposition \ref{prop:general} there are permutations
$\pi,\tau\in S_n$ and signs
$\e_1,\ldots,\e_n,\delta_1,\ldots,\delta_n\in \{-1,1\}$ such that:
\begin{eqnarray*}
\N(A+B)&=&\N\big(s_1(A+B),\ldots,s_n(A+B)\big)\\
&=&\N\big(\langle |A+B|v_1,v_1\rangle,\ldots,\langle
|A+B|v_n,v_n\rangle\big)\\
&=&\N\big(\langle (A+B)v_1,Wv_1\rangle,\ldots,\langle
(A+B)v_n,Wv_n\rangle\big)\\
&\le&\N\big(\langle Av_1,Wv_1\rangle,\ldots,\langle
Av_n,Wv_n\rangle\big)+\N\big(\langle
Bv_1,Wv_1\rangle,\ldots,\langle Bv_n,Wv_n\rangle\big)\\
&\le&
\N\big(\e_1s_{\pi(1)}(A),\ldots,\e_ns_{\pi(n)}(A)\big)+\N\big(\delta_1s_{\tau(1)}(B),\ldots,\delta_ns_{\tau(n)}(B)\big)\\
&=& \N(A)+\N(B).
\end{eqnarray*}
\end{comment}

\medskip
--------------------------------------------

The Expander Mixing Lemma (see~\cite[Lem.~2.5]{HLW06}) asserts that
for every $S,T\subseteq V$ we have
$$\left|\frac{E_G(S,T)}{\frac{d}{n}|S|\cdot |T|}-1\right| \le
\lambda_2(G)\frac{n}{\sqrt{|S|\cdot |T|}}
$$

\begin{lemma} Assume that $p> 1$ and $2<r<q$. Then for every $S\subseteq
V$ we have,
$$
\left(\sum_{j=1}^n
\left|\langle\1_S,e_j(G)\rangle\right|^r\right)^{1/r}\le
\left(M_p^q(G)\right)^{\frac{q(r-2)}{r(q-2)}}
\left(\frac{|S|}{n}\right)^{\frac{q-r}{r(q-2)}+\frac{q(r-2)}{r(q-2)}\cdot\frac{p-1}{p}}.
$$
In particular, for every $r\ge 2$,
$$
\left(\sum_{j=1}^n
\left|\langle\1_S,e_j(G)\rangle\right|^r\right)^{1/r}\le
M(G)^{1-\frac{2}{r}}\cdot \left(\frac{|S|}{n}\right)^{1-\frac{1}{r}}.
$$
\end{lemma}

\begin{proof} Consider the linear operator $T:\R^V\to \R^n$ given
by $T(x)\stackrel{\mathrm{def}}{=} \left(\langle x,e_1(G)\rangle,\langle
x,e_2(G)\rangle,\ldots,\langle x,e_n(G)\rangle\right)$. Then since
$\{e_1(G),\ldots,e_n(G)\}$ is an orthonormal basis of $L_2(V)$ we
have that $\|T(x)\|_2=\|x\|_2$. Moreover,
$$
\|T(x)\|_q=\left(\sum_{j=1}^n|\langle
x,e_j(G)\rangle|^q\right)^{1/q}\le\left(
\sum_{j=1}^n\|e_j(G)\|_p^q\cdot\|x\|^q_{p/(p-1)}\right)^{1/q}=M_p^q(G)\cdot
\|x\|_{p/(p-1)}.
$$
It follows that $\|T\|_{2\to 2}=1$ and $\|T\|_{\frac{p}{p-1}\to
q}\le M_p^q(G)$. Assume that $2< r< q$ and define $\e\in (0,1)$ by
$\frac{1}{r}=\frac{\e}{2}+\frac{1-\e}{q}$, i.e.
$\e=\frac{2(q-r)}{r(q-2)}$. Then defining $a>1$ via
$\frac{1}{a}=\frac{\e}{2}+\frac{(1-\e)(p-1)}{p}$, the Riesz-Thorin
interpolation theorem (see~\cite[Ch.~XII]{Zyg02}) implies that for
every $x\in \R^V$ we have $\|T(x)\|_r\le
M_p^q(G)^{1-\e}\cdot\|x\|_a$. When $x=\1_S$ this inequality becomes
$$
\left(\sum_{j=1}^n
\left|\langle\1_S,e_j(G)\rangle\right|^r\right)^{1/r}\le
M_p^q(G)^{1-\e}\cdot
\left(\frac{|S|}{n}\right)^{1/a}=\left(M_p^q(G)\right)^{\frac{q(r-2)}{r(q-2)}}
\left(\frac{|S|}{n}\right)^{\frac{q-r}{r(q-2)}+\frac{q(r-2)}{r(q-2)}\cdot\frac{p-1}{p}},
$$
as required.
\end{proof}

\begin{lemma}
$$\left|E_G(S,T)-\frac{d|S|\cdot |T|}{n}\right|\le \gamma d
\left[M(G)\right]^{2/p}\sqrt{|S|\cdot |T|}\cdot
\min\left\{|S|^{1/p},|T|^{1/p}\right\}$$
\end{lemma}

\begin{proof} Note that $\langle
A\1_S,\1_T\rangle=\frac{E(S,T)}{dn}$ and $\langle
\1_S,e_1(G)\rangle \cdot \langle \1_T,e_1(G)\rangle
=\frac{|S|\cdot |T|}{n^2}$. It follows that if
$1=\frac{1}{p}+\frac{1}{a}+\frac{1}{b}$, $q\ge 1$ and $2\le a,b\le
r$, then
\begin{eqnarray*}
\left|E(S,T)-\frac{d|S|\cdot |T|}{n}\right|&=& dn
\left|\sum_{j=2}^n \lambda_j(G)\langle \1_S,e_j(G)\rangle \cdot
\langle \1_T,e_j(G)\rangle \right|\\
&\le& dn\sum_{j=2}^n \left|\lambda_j(G)\langle \1_S,e_j(G)\rangle
\cdot \langle \1_T,e_j(G)\rangle\right|\\&\le& dn
\left(\sum_{j=2}^n \left|\lambda_j(G)\right|^p\right)^{1/p}\cdot
\left(\sum_{j=2}^n \left|\langle
\1_S,e_j(G)\rangle\right|^a\right)^{1/a}\cdot \left(\sum_{j=2}^n
\left|\langle \1_T,e_j(G)\rangle\right|^b\right)^{1/b}\\
&\le & dn\cdot \gamma
n^{1/p}\left(M_r^q(G)\right)^{\frac{q(a-2)}{a(q-2)}}
\left(\frac{|S|}{n}\right)^{\frac{q-a}{a(q-2)}+\frac{q(a-2)}{a(q-2)}\cdot\frac{r-1}{r}}\left(M_r^q(G)\right)^{\frac{q(b-2)}{b(q-2)}}
\left(\frac{|S|}{n}\right)^{\frac{q-b}{b(q-2)}+\frac{q(b-2)}{b(q-2)}\cdot\frac{r-1}{r}}\\
&=& \gamma d
\left(M_r^q(G)\right)^{\frac{2q}{p(q-2)}}n^{\frac{2(r-q)}{pr(q-2)}}\cdot
|S|^{\frac{q-a}{a(q-2)}+\frac{q(a-2)}{a(q-2)}\cdot\frac{r-1}{r}}
\cdot
|T|^{\frac{q-b}{b(q-2)}+\frac{q(b-2)}{b(q-2)}\cdot\frac{r-1}{r}}
\end{eqnarray*}

\end{proof}